\newtheorem{Theorem}{Theorem}[section]
\newtheorem{Lemma}[Theorem]{Lemma}
\numberwithin{equation}{section}
\newcommand{\ds}{\displaystyle}
\newcommand{\bs}[1]{\boldsymbol{#1}}
\title{Connection coefficients and 
monodromy representations 
for a class of Okubo systems of ordinary 
differential equations, II}
\author{
Shotaro KONNAI \\
{\normalsize Department of Mathematics, Kobe University}}
\date{}
\begin{document}
\maketitle

\begin{abstract}
Explicit connection coefficients and monodromy 
representations are constructed for the canonical solution matrices 
 of Okubo systems ${\rm II}^*$,$ {\rm III}^*$,$ {\rm IV}$ and $ {\rm IV}^*$ of 
ordinary differential equations in Yokoyama's list.  
\end{abstract}

\section{Introduction}
In this paper we construct explicit connection coefficients and monodromy 
representations for the canonical solution matrices 
 of Okubo systems ${\rm II}^*$,$ {\rm III}^*$,$ {\rm IV}$ and $ {\rm IV}^*$ of 
ordinary differential equations in Yokoyama's list. This work is a continuation of our 
previous work \cite{SK}. 

 Recall that Yokoyama \cite{TY} classified the types of 
irreducible rigid Okubo systems 
under the condition that the nontrivial local exponents 
are mutually distinct at each finite singular point; 
this class consists of 
eight types 
${\rm I}$, ${\rm II}$, 
${\rm III}$, ${\rm IV}$ and 
${\rm I}^\ast$, ${\rm II}^\ast$, 
${\rm III}^\ast$, ${\rm IV}^\ast$, 
which is usually referred to as {\em Yokoyama's list}.  
For each type in Yokoyama's list, 
Haraoka constructed 
a canonical form of the Okubo system
\cite{YHEq}, as well as the corresponding 
monodromy representation, 
up to the conjugation by diagonal matrices \cite{YHMon}.  

In \cite{SK}, we determined  explicit monodromy representations
for the canonical solution matrices of types 
${\rm I}$, ${\rm I}^*$, ${\rm II}$ and ${\rm III}$, 
including the diagonal matrix factors
which had not been fixed in \cite{YHMon}.
The argument of \cite{SK} is based on the fact that all the Okubo systems in Yokoyama's list 
can be constructed by finite iterations of the Katz operations of type
\begin{equation}\label{intro:mcadd}
{\rm add}_{(0,\ldots,\rho,\ldots,0)}\circ {\rm mc}_{-\rho-c}\circ 
{\rm add}_{(0,\ldots,c,\ldots,0)}({\bs A})\qquad(c,\rho\in\mathbb{C})
\end{equation}
for Okubo systems.
A characteristic feature of this operation is that for each Okubo system, the resulting system is also an Okubo system, and its explicit form is computed recursively. 
%Hence the Okubo system in Yokoyama's list can be constructed recursively by \eqref{intro:mcadd}.
%the monodromy representation for 
The canonical solution matrix of the resulting Okubo system can also be constructed recursively by the corresponding operation on solution matrices.
% Using these results, we obtain  a procedure to construct the canonical solution matrix of the Okubo system  given by \eqref{intro:mcadd}.
By these operations, we specified in \cite{SK} the connection coefficients, as well as the monodromy representations, for the canonical solution matrices of ${\rm I}$, ${\rm I}^*$, ${\rm II}$, ${\rm III}$ in Yokoyama's list.

The purpose of this paper is to give explicit connection coefficients and monodromy representations for the
canonical solution matrices of remaining types ${\rm II}^*$, ${\rm III}^*$, 
${\rm IV}$ and ${\rm IV}^*$.
% which are not determined in \cite{SK} using the above method. 
%We remark that we use a {\em symmetry} and property of the Okubo system to derive connection coefficients of types ${\rm I}$, ${\rm I}^*$, ${\rm II}$, ${\rm III}$ in \cite{SK}.
To derive connection coefficients 
 of these types, we need to carefully  investigate {\em symmetry}  of the systems with respect to the characteristic exponents, in contrast to the cases discussed in \cite{SK} (for the detail, see Section 4).

\section{Review of the method of Katz operations}
We consider the Okubo system 
\begin{equation}\label{eq:Okubo}
(x I_n-T)\frac{d}{dx}Y=AY, \quad T={\rm diag}(t_1 I_{n_1},\ldots,t_r I_{n_r})
\end{equation}
on $\mathcal{D}=\mathbb{C}\backslash \{t_1,\ldots,t_r\}$ 
associated with an $n\times n$ block matrix $ A=(A_{ij})_{i,j=1}^r\in {\rm Mat}(n;\mathbb{C})$ of type $(n_1,\ldots,n_r)$, $n=n_1+\cdots+n_r$,
where $A_{ij}\in {\rm Mat}(n_i,n_j;\mathbb{C})$ $(i,j=1,\ldots,r)$.
This system can be rewritten as the Schlesinger system
\begin{equation}\label{eq:Sch}
\frac{d}{dx}Y=\sum_{k=1}^r\frac{A_k}{x-t_k}Y, \quad A_k=(\delta_{ik}A_{kj})_{i,j=1}^r\quad (k=1\ldots,r).
\end{equation}
We denoting by $\alpha_{k1},\ldots,\alpha_{kn_k}$ 
the eigenvalues of the diagonal block $A_{kk}$ ($k=1,\ldots,r$) 
and by $\rho_1,\ldots,\rho_n$ the eigenvalues of $A=A_1+\cdots+A_r$; 
these eigenvalues are subject to the {\em Fuchs relation}
\begin{equation}
\sum_{k=1}^{r}\sum_{j=k1}^{kn_k}\alpha_j=\sum_{j=1}^{n}\rho_j.
\end{equation}
We assume hereafter that 
\begin{equation}\label{assumedet1}
\alpha_{ki}-\alpha_{kj}\notin 
\mathbb{Z}\backslash \{0\}
\quad(1\le i<j\le n_k),\quad
\alpha_{kj}\notin \mathbb{Z}\quad(1\le j\le n_k)
\end{equation}
for $k=1,\ldots,r$. 

In what follows, we identify \eqref{eq:Sch} with the $r$-tuple of matrices $\bs{A}=(A_1,\ldots,A_r)$.
As for the Katz operations
%n this paper, the notations of Katz operations 
$({\rm add}_{\bs{a}}$ and ${\rm mc}_{\mu})$,
we follow the notations of our previous paper \cite{SK}. 

\par\medskip
We first recall from \cite{SK} the method of construction for Okubo systems
% and recurrence relations of its connection coefficients 
by Katz operations.
%Any Okubo systems in Yokoyama list are constructed by this operation. Therefore the method for constructing Okubo system used in this paper is given as follows

\begin{Theorem}\label{mcadd:eq}{\rm \cite[Section $4$]{SK}}
%Remarking that $W$ and $\widehat{Z}$ are the notations introduced by the definition of the middle convolution \cite{SK}. 
Let $\rho,c\in \mathbb{C}$ be two complex parameters and choose an index $k$ $(k=1,\ldots,r)$.
 Assume that the Okubo system \eqref{eq:Okubo} satisfies the conditions
% the parameter $\rho$ and $c$ satisfysatisfy 
${\rm Ker}(A_{kk}+c)=0$ and ${\rm Ker}(A_{kk}-\rho)=0$.
Then the Katz operation
\begin{equation}\label{mcadd}
{\rm add}_{(0,\ldots,\rho,\ldots,0)}\circ {\rm mc}_{-\rho-c}\circ 
{\rm add}_{(0,\ldots,c,\ldots,0)}({\bs A})\qquad(c,\rho\in\mathbb{C})
\end{equation}
transforms the Okubo system \eqref{eq:Okubo} into the Okubo system 
\begin{equation}\label{eq:mcadd}
(x-\widetilde{T})\frac{d}{dx}W=A^{mc}W,\qquad
%A^{mc}={\rm Ad}(G)(\widehat B_{1}+\cdots +\widehat B_r+\rho)
\quad \widetilde{T}={\rm diag}(t_1I_{n_1},\ldots,t_kI_{\widetilde{n}_k},
\ldots,t_rI_{n_r}),
\end{equation}
%for $W=G\widehat{Z}$, 
of type $(n_1,\ldots,n_{k-1},\widetilde{n}_k,n_{k+1},\ldots,n_r)$,  
$\widetilde{n}_k=
%\widetilde{n}=nr-(n-n_1)-\cdots-0-\cdots-(n-n_r)-\{\rm dim Ker}(A-\rho)
n-{\rm dim Ker}(A-\rho)$, 
where 
\begin{equation}
A^{mc}=\begin{pmatrix}
& A_{1k}(A_{kk}+c)(A_{kk}-\rho)^{-1} &(\rho+c)\xi_{1} &&\\[-3pt]
\ A_{ij}-(\rho+c)\delta_{ij}\ & \vdots & \vdots & A_{ij} \\[-3pt]
& A_{k-1,k}(A_{kk}+c)(A_{kk}-\rho)^{-1} &(\rho+c)\xi_{k-1} &&\\[4pt]
A_{k1}\ \ \ldots\ \ A_{k,k-1}  & A_{kk} & 0 & A_{k,k+1}\ \ \ldots \ \ A_{kr} \\[2pt]
\eta_{1}\ \ \ \ldots\ \ \ \eta_{k-1}  
& 0 & \rho & \eta_{k+1}\ \ \ \ldots\ \ \ \eta_{r} \\[2pt]
& A_{k+1,k}(A_{kk}+c)(A_{kk}-\rho)^{-1} & (\rho+c)\xi_{k+1}& \\[-4pt]
A_{ij} & \vdots & \vdots & A_{ij}-(\rho+c)\delta_{ij}\\[-3pt]
& A_{rk}(A_{kk}+c)(A_{kk}-\rho)^{-1} & (\rho+c)\xi_{r}& 
\end{pmatrix},
\end{equation}
and $\xi_i\in {\rm Mat}(n_i,\widetilde{n}_k)$ and $\eta_j\in {\rm Mat}(\widetilde{n}_k,n_j)$ are matrices characterized by the conditions
\begin{equation}
\xi_i\eta_j=A_{ij}-A_{ik}(A_{kk}-\rho)^{-1}A_{kj}\quad ( i,j=1,\ldots,r;\,i,j\neq k).
\end{equation}
\end{Theorem}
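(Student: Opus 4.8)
The plan is to verify the statement by a direct computation, composing the three Katz operations in the order indicated and tracking a suitably adapted basis throughout. Using the definitions of $\mathrm{add}_{\bs a}$ and $\mathrm{mc}_\mu$ recalled from \cite{SK}, I would first dispose of the two outer additions, which only shift spectra. The operation $\mathrm{add}_{(0,\ldots,c,\ldots,0)}$ replaces the residue at $t_k$ by $A_k+cI_n$, so the residue sum $A=A_1+\cdots+A_r$ becomes $A+cI_n$; consequently the kernel computed below is $\ker\big((A+cI_n)-(\rho+c)\big)=\ker(A-\rho)$, which is exactly the identity responsible for the dimension $\widetilde n_k=n-\dim\ker(A-\rho)$ in the statement. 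The final $\mathrm{add}_{(0,\ldots,\rho,\ldots,0)}$ then accounts for the scalar $\rho$ sitting in the new $k$-th diagonal block and for the corrections $-(\rho+c)\delta_{ij}$ on the remaining diagonal blocks of $A^{mc}$. The hypotheses $\ker(A_{kk}+c)=0$ and $\ker(A_{kk}-\rho)=0$ guarantee that $A_{kk}+c$ and $A_{kk}-\rho$ are invertible, which is precisely what is needed for every inverse occurring in $A^{mc}$ and, more importantly, for the basis selection in the middle convolution.

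The core of the argument is the middle convolution $\mathrm{mc}_{-\rho-c}$. Following the Dettweiler--Reiter description, I would form the convolution matrices on $(\mathbb{C}^n)^{\oplus r}$ attached to the shifted tuple with parameter $-\rho-c$, and then pass to the quotient by $\mathcal{K}\oplus\mathcal{L}$, where $\mathcal{K}=\bigoplus_i\ker A_i$ is built from the (shifted) residues and $\mathcal{L}$ is the diagonally embedded copy of $\ker(A-\rho)$ obtained above. Since the input is in Okubo form, each $A_i$ is supported on its $i$-th block-row, so both $\mathcal{K}$ and $\mathcal{L}$ are compatible with the block grading; this lets me choose representatives for the quotient adapted to that grading. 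Eliminating the $k$-th convolution component in these representatives uses the invertibility of $A_{kk}-\rho$, and it is exactly this elimination that produces the Schur-complement expression $A_{ij}-A_{ik}(A_{kk}-\rho)^{-1}A_{kj}$ together with the factor $(A_{kk}+c)(A_{kk}-\rho)^{-1}$ multiplying the $i$-th column block.

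With the adapted basis in hand, I would compute the induced residues and check the two structural claims. First, that the resulting Fuchsian system can be placed in Okubo normal form, i.e. that in the chosen basis the residue at $t_k$ is again supported on a single block-row, so that the whole system is genuinely an Okubo system; second, that its $k$-th diagonal block decomposes into the original part carrying $A_{kk}$ and a new part carrying the scalar $\rho$. The coupling of this new part to the blocks $i,j\neq k$ is recorded by matrices $\xi_i$ and $\eta_j$, which I read off as a rank factorization of the Schur-complement matrix $\big(A_{ij}-A_{ik}(A_{kk}-\rho)^{-1}A_{kj}\big)_{i,j\neq k}$; the defining relation $\xi_i\eta_j=A_{ij}-A_{ik}(A_{kk}-\rho)^{-1}A_{kj}$ is then automatic, and the common inner dimension of the factorization is the size of the new $\rho$-part. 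A rank count relating this inner dimension to $\ker(A-\rho)$ yields $\widetilde n_k=n-\dim\ker(A-\rho)$, completing the identification of $A^{mc}$.

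The step I expect to be the main obstacle is the bookkeeping in the quotient space: one must choose the adapted basis so that, simultaneously, the $k$-th residue keeps its single-block-row (Okubo) support and the $\xi/\eta$ factorization of the Schur complement is manifest, and then match every entry of $A^{mc}$ --- in particular the placement of the factors $(\rho+c)$ and of the blocks $A_{ik}(A_{kk}+c)(A_{kk}-\rho)^{-1}$ --- against the induced action on that basis. Establishing the rank identity behind $\widetilde n_k=n-\dim\ker(A-\rho)$ is the one genuinely nonroutine piece of linear algebra; everything else reduces to careful substitution.
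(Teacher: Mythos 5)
Your overall route --- compose the two additions with the Dettweiler--Reiter middle convolution, quotient by $\mathcal{K}\oplus\mathcal{L}$, choose a block-adapted basis, and eliminate the $k$-th convolution component using the invertibility of $A_{kk}-\rho$ --- is exactly the mechanism behind this theorem; the present paper does not reprove it but imports it from \cite{SK}, whose argument is of this type, and your exponent bookkeeping ($\ker\big((A+cI_n)-(\rho+c)\big)=\ker(A-\rho)$, invertibility of $A_{kk}+c$ and $A_{kk}-\rho$ from the two kernel hypotheses, the dimension count $rn-\dim\mathcal{K}-\dim\mathcal{L}$) is sound.

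However, the step you yourself flag as the one genuinely nonroutine piece fails as you state it. The matrix $\big(A_{ij}-A_{ik}(A_{kk}-\rho)^{-1}A_{kj}\big)_{i,j\neq k}$ is \emph{not} the Schur complement that the elimination produces, and its rank is not $\widetilde{n}_k-n_k$. What the elimination actually yields is the Schur complement of $A-\rho$ with respect to the $(k,k)$ block, namely $S_{ij}=A_{ij}-\rho\,\delta_{ij}-A_{ik}(A_{kk}-\rho)^{-1}A_{kj}$; the block LDU factorization of $A-\rho$ gives $\mathrm{rank}(A-\rho)=n_k+\mathrm{rank}\,S$, hence $\mathrm{rank}\,S=n-n_k-\dim\ker(A-\rho)=\widetilde{n}_k-n_k$, which is precisely the identity that makes a factorization $S=\xi\eta$ with inner size $\widetilde{n}_k-n_k$ possible. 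The matrix printed in the statement equals $S+\rho I_{n-n_k}$, which is generically invertible of rank $n-n_k>\widetilde{n}_k-n_k$ whenever $\dim\ker(A-\rho)>0$ --- the case in every application in this paper --- so no factorization of the required inner size exists and your claim that the defining relation ``is then automatic'' cannot hold for the matrix as displayed. The displayed condition is in fact a typo in the statement (as is $\xi_i\in\mathrm{Mat}(n_i,\widetilde{n}_k)$, which should be $\mathrm{Mat}(n_i,\widetilde{n}_k-n_k)$): the paper's own use of the theorem in the proof of Theorem 3.1 computes $(\xi_2)_i(\eta_2)_j=(\beta_i-\rho_2)\delta_{ij}-\cdots$ and $\xi_3\eta_3=\gamma-\rho_2-\cdots$, i.e.\ with the $-\rho\,\delta_{ij}$ term present. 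Your plan, if executed faithfully, would surface this correction, but as written you assert the rank identity for the wrong matrix and defer exactly the verification that would fail. A secondary slip: the shifts $-(\rho+c)$ on the diagonal blocks $A_{ii}$, $i\neq k$, are produced by $\mathrm{mc}_{-\rho-c}$ itself (the nonzero local exponents at $t_i$ are shifted by the convolution parameter), not by the final addition, which is supported at $t_k$ only.
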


Under the condition \eqref{assumedet1}, for each $j=1,\ldots,r$, the Okubo system \eqref{eq:Okubo} has a unique  $n\times n_j$ solution matrix $\Psi_j(x)$ 
characterized by the local behavior 
\begin{equation}
\Psi_j(x)=F_j(x)(x-t_j)^{A_{jj}},\quad  F_j(x)\in {\rm Mat}(n,n_j;\mathcal{O}_{t_j}),
\quad F_{j}(t_j)=(0,I_{n_j},0)^t,
\end{equation}
around $x=t_j$.
Collecting these solution matrices, we consider the $n\times n$ solution matrix
$\Psi(x)=(\Psi_1(x),\ldots,\Psi_r(x))$.
It is known that this $\Psi(x)$ is a fundamental solution matrix, which we 
call the {\em canonical solution matrix} of \eqref{eq:Okubo} (for the detail, see \cite[Section 1.1]{SK}).

%Let $\Psi(x)=(\Psi^{(1)}_1(x),\ldots,\Psi_r^{(r)}(x))$ be a solution matrix of the Okubo system \eqref{eq:Okubo}.
% If condition \eqref{assumedet1} is satisfied, we can uniquely take $\Psi(x)$ as follows;
%\begin{equation} \Psi_k^{(k)}(x)=F_k^{(k)}(x)(x-t_k)^{A_{kk}},\quad  F_k^{(k)}(x)\in {\rm Mat}(n,n_k:\mathcal{O}_{t_k}), \quad F_{k}^{(k)}(t_k)=(0,I_{n_k},0)^t,\quad (k=1,\ldots,r).
%\end{equation}
%It is known that this solution matrix is fundamental solution matrix and we called $\Psi(x)$ {\em canonical solution matrix} in \cite{SK}.
%We define $r$-tuple of monodromy matrices ${\bs M}=(M_1,\ldots,M_r)$ as the analytic continuations of $\Psi(x)$ by $\gamma_k$ $(k=1,\ldots,r)$.

We remark that for each pair $(i,j)$ $ (i,j=1,\ldots,r)$, the solution matrix $\Psi_j(x)$ is expressed uniquely in the form 
\begin{equation}
\Psi_j(x)=\Psi_i(x)C_{ij}+H_{ij}(x)
\end{equation}
around $x=t_i$, where  $C_{ij}$ is an $n_i\times n_j$ constant matrix and $H_{ij}(x)$ is an $n\times n_j$ solution matrix  holomorphic  around $x=t_i$.
These matrices $C_{ij}$ are called the {\em connection coefficients} for the canonical solution matrix $\Psi(x)$.

Fixing a base point $p_0$ on $\mathcal{D}$, we denote by 
$\gamma_i$ ($i=1,\ldots,r$) and $\gamma_\infty$ the homotopy classes 
in $\pi_1(\mathcal{D},p_0)$ of continuous paths 
encircling $x=t_i$ and $x=\infty$ 
in the positive direction, respectively; 
we choose these paths so that $\gamma_\infty \gamma_1\cdots \gamma_r=1$ 
in $\pi_{1}(\mathcal{D},p_0)$ (\cite[Figure 1]{SK}). 
The analytic continuation of $\Psi(x)$ by $\gamma_i$ 
($i=1,\ldots,r$) 
defines an $r$-tuple of matrices 
${\bs M}=(M_1,\ldots,M_r)
\in{\rm GL}(n;\mathbb{C})^r$ as 
\begin{equation}
\gamma_i\!\cdot\!\Psi(x)=\Psi(x)M_i \quad(i=1,\ldots,r). 
\end{equation}
These {\em  monodromy matrices} are expressed as follows in terms of the connection coefficients (\cite[Section 1.2]{SK}):
\begin{equation}\label{eq:Mk}
M_i=
\begin{pmatrix}
1 & & &  &\\[-4pt]
&\ddots & &  &\\
(e(A_{ii})-1)C_{i1}&\dots &e(A_{ii}) & \ldots
 &(e(A_{ii})-1)C_{ir} \\
 & & &\ddots&\\[-4pt]
 & & &  &1
\end{pmatrix}\quad(i=1,\ldots,r),
\end{equation}
where $e(\mu)=e^{2\pi i\mu}$.

In view of Theorem \ref{mcadd}, 
we consider the Okubo system \eqref{eq:mcadd} obtained from \eqref{eq:Okubo} by the Katz 
operation \eqref{mcadd}.
Let $\Psi^{mc}(x)=(\Psi^{mc}_1(x),\ldots,\Psi^{mc}_{k}(x),\ldots,\Psi^{mc}_r(x))$ be the canonical solution matrix
of \eqref{eq:mcadd}, for which the $k$-th component is further decomposed as 
$\Psi^{mc}_{k}(x)=(\Psi^{mc}_{k1}(x),\Psi^{mc}_{k2}(x))$. The components of $\Psi^{mc}(x)$ are characterized by the conditions
\begin{equation}
\Psi^{mc}_j(x)=F^{mc}_j(x)(x-t_j)^{A_{jj}-\rho+c},
%\quad  (F^{mc})_i^{(i)}(x)\in {\rm Mat}(\widetilde{n},n_i:\mathcal{O}_{t_i}),
\quad F^{mc}_{j}(t_j)=(0,I_{n_j},0)^t,
\quad (j=1,\ldots,r;j\neq k),
\end{equation}
around $x=t_j$, and 
\begin{equation}
\begin{split}
&\Psi^{mc}_{k1}(x)=F^{mc}_{k1}(x)(x-t_k)^{A_{kk}},
%\quad  (F^{mc})_{k1}^{(k)}(x)\in {\rm Mat}(\widetilde{n},n_k:\mathcal{O}_{t_k}),
\quad F^{mc}_{k1}(t_k)=(0,I_{n_k},0)^t,\\
&
\Psi^{mc}_{k2}(x)=F^{mc}_{k2}(x)(x-t_k)^{\rho},
%\quad  (F^{mc})_{k2}^{(k)}(x)\in {\rm Mat}(\widetilde{n},\widetilde{n}_k-n_k:\mathcal{O}_{t_k}),
\quad F^{mc}_{k2}(t_k)=(0,I_{\widetilde{n}_k-n_k},0)^t.
\end{split}
\end{equation}
around $x=t_k$.

For each pair  $(i,j)$ $(i,j=1,\ldots,r)$, the connection matrix $C^{mc}_{ij}$ for $\Psi^{mc}(x)$ is specified by the condition
\begin{equation}
\Psi^{mc}_j(x)=\Psi^{mc}_i(x)C^{mc}_{ij}+H^{mc}_{ij}(x)
\end{equation}
around $x=t_i$.
For each $i=1,\ldots,r$, the monodromy matrix $M_i^{mc}\in {\rm Mat}(\widetilde n;\mathbb{C})$ for $\Psi^{mc}(x)$ with respect to $\gamma_i$ is given by 
\begin{equation}
\begin{split}
M_i^{mc}&=
\begin{pmatrix}
1&&&&\\
&\ddots&&&\\
(e(A_{ii}-\rho-c)-1)C_{i1}^{mc}&\cdots & e(A_{ii}-\rho-c)&
 \cdots & (e(A_{ii}-\rho-c)-1)C_{ir}^{mc}\\
&&&\ddots& \\
&&&&1 
\end{pmatrix}
\quad(i\ne k)
%C_{ik}^{mc}=(C_{i(k1)}^{mc},C_{i(k2)}^{mc})
\end{split}
\end{equation}
where $C_{ik}^{mc}=(C_{i(k1)}^{mc},C_{i(k2)}^{mc})$, 
and 
\begin{equation}
\begin{split}
M_k^{mc}&=
\begin{pmatrix}
1&&&&&\\
&\ddots&&&&\\
(e(A_{kk})-1)C_{(k1)1}^{mc}&\cdots&e(A_{kk})&0&\cdots&(e(A_{kk})-1)C_{(k1)r}^{mc}\\
(e(\rho)-1)C_{(k2)1}^{mc}&\cdots &0&e(\rho)&\cdots
 &(e(\rho)-1)C_{(k2)r}^{mc}\\
&&&&\ddots&\\
&&&&&1
\end{pmatrix}.
\end{split}
\end{equation}
where $C_{kj}^{mc}=(C_{(k1)j}^{mc},C_{(k2)j}^{mc})^t$.
%As the case of the Okubo system, we clarify the relation between $C_{ij}$ and $C^{mc}_{ij}$ as follows.
%In \cite{SK}, we proved that how canonical solution matrix $\Psi^{mc}(x)$ of the Okubo system \eqref{eq:mcadd} is constructed and give the expression of the connection coefficients $C_{kj}^{mc}$ for $\Psi^{mc}(x)$.  
\par\medskip

\begin{Theorem}\label{conn:mcadd}{\rm \cite[Theorem $4.4$]{SK}}
The connection coefficients $C_{ij}^{mc}$
and $C_{i(k1)}^{mc}$, $C_{(k1)j}^{mc}$ 
$(i,j\ne k)$ 
for the canonical 
solution matrix $\Psi^{mc}(x)$ are 
determined from the connection coefficients $C_{ij}$ 
for $\Psi(x)$ by  the following recurrence formulas :
\begin{equation}\label{cij}
\begin{split}
C_{ij}^{mc}
%&=(e(A_{ii}-\rho-c)-1)^{-1}A_{ii}(t_i-t_k)^{\rho+c}
%\widetilde B(A_{ii},-\rho-c+1)(e(-\rho-c))^{\delta(i<j)}C_{ij}\\
%&\quad\cdot (e(A_{jj})-1)(\widetilde B(A_{jj},-\rho-c+1))^{-1}(t_j-t_k)^{-c-\rho}A_{jj}^{-1}\\
&=
\begin{cases}
\displaystyle
\left(\frac{t_i-t_k}{t_j-t_k}\right)^{\rho+c}
\frac{e(\frac{-1}{2}(\rho+c))\Gamma(\rho+c-A_{ii})}{\Gamma(-A_{ii})}
C_{ij}\frac{\Gamma(A_{jj}-\rho-c+1)}{\Gamma(A_{jj}+1)}
\ &(j<i;\,i,j\neq k)\\[10pt]
\displaystyle
\left(\frac{t_i-t_k}{t_j-t_k}\right)^{\rho+c}
\frac{e(\frac{1}{2}(\rho+c))\Gamma(\rho+c-A_{ii})}{\Gamma(-A_{ii})}
C_{ij}\frac{\Gamma(A_{jj}-\rho-c+1)}{\Gamma(A_{jj}+1)}
\ &(i<j;\,i,j\neq k),
\end{cases}
\end{split}
\end{equation}
where $\delta(i<j)=1$ if $i<j$ and $\delta(i<j)=0$ otherwise. 
Similarly, 
\begin{equation}\label{Ci(k1)}
\begin{split}
C_{i(k1)}^{mc}&=
\begin{cases}
\displaystyle
(t_i-t_k)^{\rho+c}e(\tfrac{1}{2}(\rho+c))
\frac{\Gamma(\rho+c-A_{ii})}{\Gamma(-A_{ii})}
C_{ik}\frac{\Gamma(A_{kk}-\rho)}{\Gamma(A_{kk}+c)}
\quad &(i<k)\\[10pt]
\displaystyle
(t_i-t_k)^{\rho+c}e(\tfrac{-1}{2}(\rho+c))
\frac{\Gamma(\rho+c-A_{ii})}{\Gamma(-A_{ii})}
C_{ik}\frac{\Gamma(A_{kk}-\rho)}{\Gamma(A_{kk}+c)}
\quad &(k<i)
\end{cases}
\end{split},
\end{equation}
\begin{equation}\label{C(k1)j}
\begin{split}
C_{(k1)j}^{mc}
&=
\begin{cases}
\displaystyle
-e(\tfrac{-1}{2}(\rho+c))(t_j-t_k)^{-\rho-c}
\frac{\Gamma(1+\rho-A_{kk})}{\Gamma(1-A_{kk}-c)}C_{kj}
\frac{\Gamma(A_{jj}-\rho-c+1)}{\Gamma(A_{jj}+1)}
\quad &(j<k)\\[10pt]
\displaystyle
-e(\tfrac{1}{2}(\rho+c))(t_j-t_k)^{-\rho-c}
\frac{\Gamma(1+\rho-A_{kk})}{\Gamma(1-A_{kk}-c)}C_{kj}
\frac{\Gamma(A_{jj}-\rho-c+1)}{\Gamma(A_{jj}+1)}
\quad &(k<j).
\end{cases}
\end{split}
\end{equation}
\end{Theorem}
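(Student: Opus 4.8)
The plan is to track the composite Katz operation \eqref{mcadd} directly on the canonical solution matrix, thereby reducing the connection problem for $\Psi^{mc}(x)$ to that for $\Psi(x)$. On the level of solutions the two ${\rm add}$ operations are scalar gauge factors, multiplying by $(s-t_k)^{c}$ inside the integral and by $(x-t_k)^{\rho}$ outside, while the middle convolution ${\rm mc}_{-\rho-c}$ is realized by a Riemann--Liouville (Euler) transform with kernel $(x-s)^{-\rho-c-1}$ over suitable regularized cycles. Composing the three, I would represent each transformed solution as an integral $(x-t_k)^{\rho}\int \Psi_j(s)(s-t_k)^{c}(x-s)^{-\rho-c-1}\,ds$ taken over a cycle based near $t_j$; the entire argument is then the extraction of the local behaviour of such integrals at the singular points by Euler's beta integral.

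First I would treat the generic coefficients \eqref{cij} with $i,j\ne k$. Normalizing the cycle so that $\Psi^{mc}_j(x)=F^{mc}_j(x)(x-t_j)^{A_{jj}-\rho-c}$ with $F^{mc}_j(t_j)=(0,I_{n_j},0)^{t}$ amounts to dividing out the local beta integral at $t_j$, whose value $(x-t_j)^{A_{jj}-\rho-c}\,\Gamma(A_{jj}+1)\Gamma(-\rho-c)/\Gamma(A_{jj}-\rho-c+1)$ supplies the right-hand Gamma quotient. To read off $C^{mc}_{ij}$ I would substitute $\Psi_j(s)=\Psi_i(s)C_{ij}+H_{ij}(s)$ near $t_i$: the holomorphic part feeds only $H^{mc}_{ij}$, whereas the singular part of the transform of $\Psi_i C_{ij}$, compared with the locally normalized $\Psi^{mc}_i$, produces the term $\Psi^{mc}_iC^{mc}_{ij}$. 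This comparison is again a beta integral at $t_i$, and the reflection formula for the Gamma function both rewrites it as the left-hand quotient $\Gamma(\rho+c-A_{ii})/\Gamma(-A_{ii})$ and generates the phase $e(\pm\tfrac12(\rho+c))$, the two $\Gamma(-\rho-c)$ factors from the endpoints cancelling. The branch of $(x-s)^{-\rho-c-1}$ together with the gauge weights evaluated at the two base points fixes the sign of the phase and the power $\bigl((t_i-t_k)/(t_j-t_k)\bigr)^{\rho+c}$, according to whether $i<j$ or $j<i$.

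For the mixed coefficients \eqref{Ci(k1)} and \eqref{C(k1)j} I would run the same scheme at the distinguished index $k$, where the two ${\rm add}$ operations act and the block size changes from $n_k$ to $\widetilde n_k$. Here the ${\rm add}_c$ shift replaces the relevant exponent by $A_{kk}+c$ in the integrand, while the $F^{mc}_{k1}$-block retains exponent $A_{kk}$ and the newly created $F^{mc}_{k2}$-block carries $\rho$; the beta integrals at $t_k$ are therefore governed by the triple $A_{kk}+c,\,A_{kk},\,\rho$, and after the reflection formula they collapse to the quotients $\Gamma(A_{kk}-\rho)/\Gamma(A_{kk}+c)$ in \eqref{Ci(k1)} and $\Gamma(1+\rho-A_{kk})/\Gamma(1-A_{kk}-c)$ in \eqref{C(k1)j}. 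Since now only one of the two base points carries the $t_k$-gauge weight, a single power $(t_i-t_k)^{\rho+c}$ (resp. $(t_j-t_k)^{-\rho-c}$) survives, and the overall minus sign in \eqref{C(k1)j} arises from the orientation of the cycle around $t_k$ along which the $\rho$-exponent block is extracted.

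The step I expect to be the main obstacle is the branch- and phase-tracking across the contour deformations. One must first regularize the cycles, e.g. as Pochhammer-type double loops, so that the transform realizes the \emph{middle} convolution and not merely the naive one, and must then follow each branch of $(x-s)^{-\rho-c-1}$ and of the gauge factors $(x-t_k)^{\rho}$, $(s-t_k)^{c}$ consistently as the cycle is moved between $t_j$ and $t_i$. Pinning down the two cases $i<j$ and $j<i$, and in particular the overall sign in \eqref{C(k1)j}, is the delicate point; once the phases are settled, the remaining identities follow from repeated use of Euler's beta integral and the reflection and translation formulas for the Gamma function.
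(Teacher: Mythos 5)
Your proposal is essentially the proof of this statement, which the present paper does not reprove: the theorem is imported verbatim from \cite[Theorem 4.4]{SK}, where it is established exactly along the lines you describe --- realizing ${\rm mc}_{-\rho-c}$ on the canonical solutions by a Riemann--Liouville integral with kernel $(x-s)^{-\rho-c-1}$ sandwiched between the scalar gauge factors $(s-t_k)^{c}$ and $(x-t_k)^{\rho}$, normalizing and comparing local expansions at $t_j$, $t_i$, $t_k$ via Euler's beta integral and the reflection formula, with the branch tracking along the real ordering of $t_1,\ldots,t_r$ producing precisely the phases $e(\pm\tfrac12(\rho+c))$ and the case split $i<j$ versus $j<i$. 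One simplification you could borrow from that setting: since Theorem \ref{mcadd:eq} already encodes the passage to the \emph{middle} convolution explicitly in the matrix data $(\xi_i,\eta_j)$, and under \eqref{assumedet1} the components of $\Psi^{mc}(x)$ are characterized purely by their local exponents, it suffices to check that the naive convolution integral along suitably chosen paths has the prescribed local behavior --- no Pochhammer double-loop regularization is needed, which defuses the main obstacle you single out.
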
 
We remark that the connection coefficients $C_{i(k2)}$ and $C_{(k2)j}$ are not specified in this theorem. In the cases of the Okubo systems in Yokoyama's list, however,  those coefficients can be determined by the symmetry of the characteristic exponents (Section 4).

\section{Canonical forms of Okubo system of types ${\rm II}^*$,${\rm III}^*$,${\rm IV}$,${\rm IV}^*$}

Yokoyama's list is a class of rigid irreducible Okubo systems 
such that 
each diagonal block
$A_{ii}\in{\rm Mat}(n_i;\mathbb{C})$ $(i=1,\ldots,r)$ of $A$ 
has $n_i$ mutually distinct eigenvalues 
and that the matrix 
$A\in{\rm Mat}(n;\mathbb{C})$ is diagonalizable. 
In this class we can assume that 
the diagonal blocks $A_{ii}$ ($i=1,\ldots,r$) 
are diagonal matrices with mutually distinct entries. 
We assume that $A$ is diagonalized as
\begin{equation}
A\sim{\rm diag}(\rho_1I_{m_1},\ldots,\rho_{q}I_{m_q})\quad (m_1\geq\cdots \geq m_q)
\end{equation} 
with mutually distinct $\rho_1,\ldots,\rho_q\in\mathbb{C}$. 
Then the 
Okubo systems in Yokoyama's list 
are characterized  
by the following eight pairs $(n_1,\ldots,n_r)$, 
$(m_1,\ldots,m_q)$ of partitions of $n$ respectively.
\begin{equation}
\begin{array}{clcl}
({\rm I})_n:
&
\begin{cases}
(n_1,n_2)=(n-1,1)\\
(m_1,\ldots,m_n)=(1,\ldots,1)
\end{cases}\quad 
&
({\rm I}^*)_n:
&
\begin{cases}
(n_1,\ldots,n_n)=(1,\ldots,1)\\
(m_1,m_2)=(n-1,1)
\end{cases}\quad 
\\[16pt]
({\rm II})_{2n}:
&
\begin{cases}
(n_1,n_2)=(n,n)\\
(m_1,m_2,m_3)=(n,n-1,1)
\end{cases}\quad 
&
({\rm II^\ast})_{2n}:
&
\begin{cases}
(n_1,n_2,n_3)=(n,n-1,1)\\
(m_1,m_2)=(n,n)
\end{cases}\quad 
\\[16pt]
({\rm III})_{2n+1}:
&
\begin{cases}
(n_1,n_2)=(n+1,n)\\
(m_1,m_2,m_3)=(n,n,1)
\end{cases}
&
({\rm III^\ast})_{2n+1}:
&
\begin{cases}
(n_1,n_2,n_3)=(n,n,1)\\
(m_1,m_2)=(n+1,n)
\end{cases}
\\[16pt]
({\rm IV})_{6}:
&
\begin{cases}
(n_1,n_2)=(4,2)\\
(m_1,m_2,m_3)=(2,2,2)
\end{cases}\quad 
&
({\rm IV^\ast})_{6}:
&
\begin{cases}
(n_1,n_2,n_3)=(2,2,2)\\
(m_1,m_2)=(4,2)
\end{cases} 
\end{array}
\end{equation}
In what follows, we use the symbols $({\rm II}^*)_{2n}$, $({\rm III}^*)_{2n+1}$,$ ({\rm IV})_6$, and $({\rm IV}^*)_6$ to refer to the corresponding tuples $\bs{A}$. As for the nontrivial eigenvalues of $A_{11}$, $A_{22}$, $A_{33}$ and $A$, we use the notations 
$\alpha_i=\alpha_i^{(l)}$, $\beta_i=\beta_i^{(l)}$, $\gamma_i=\gamma_i^{(l)}$ and $\rho_i=\rho_i^{(l)}$, where $l$ denotes the rank $l$ of the Okubo system. In the following  theorems,
we give canonical forms of the Okubo systems of Yokoyama's list  which we will use throughout 
this paper.
\begin{Theorem}
The systems $({\rm II}^*)_{2n}$ and $({\rm III}^*)_{2n+1}$ are expressed in the form 
\begin{equation}
(x-T)\frac{d}{dx}Y=
\begin{pmatrix}
\alpha&A_{12}&A_{13}\\
A_{21}&\beta&A_{23}\\
A_{31}&A_{32}&\gamma
\end{pmatrix}Y,
\end{equation}
where the matrices $A_{kl}$ are given as follows.\\
$({\rm II}^*)_{2n}$
\begin{equation}\label{eq:II*}
\begin{split}
(A_{12})_{ij}&=\frac{\prod_{k\neq i}^n(\beta_j+\alpha_k-\rho_1-\rho_2)}{\prod_{k\neq j}^{n-1}(\beta_j-\beta_k)}\quad (i=1,\ldots,n;j=1,\ldots,n-1),\\
(A_{13})_i&=1\quad (i=1,\ldots,n),\\
(A_{23})_i&=1\quad  (i=1,\ldots,n-1),\\
(A_{21})_{ij}&=(\alpha_j-\rho_1)(\alpha_j-\rho_2)\frac{\prod_{k\neq i}^{n-1}(\alpha_j+\beta_k-\rho_1-\rho_2)}{\prod_{k\neq j}^n(\alpha_j-\alpha_k)}\quad 
 (i=1,\ldots,n-1,j=1,\ldots,n),\\
(A_{31})_{j}&=-(\alpha_j-\rho_1)(\alpha_j-\rho_2)\frac{\prod_{k=1}^{n-1}(\alpha_j+\beta_k-\rho_1-\rho_2)}{\prod_{k\neq j}^n(\alpha_j-\alpha_k)}\quad
(j=1,\ldots,n),\\
(A_{32})_{j}&=-\frac{\prod_{k=1}^n(\beta_j+\alpha_k-\rho_1-\rho_2)}{\prod_{k\neq j}^{n-1}(\beta_j-\beta_k)}\quad 
(j=1;\ldots,n-1),\\
\end{split}
\end{equation}
$({\rm III}^*)_{2n+1}$
\begin{equation}\label{eq:III*}
\begin{split}
(A_{12})_{ij}&=(\beta_j-\rho_1)\frac{\prod_{k\neq i}^n(\beta_j+\alpha_k-\rho_1-\rho_2)}{\prod_{k\neq j}^{n}(\beta_j-\beta_k)}\quad
 (i,j=1,\ldots,n),\quad \hspace{110pt}\\
(A_{13})_i&=1\quad (i=1,\ldots,n),\\
(A_{23})_i&=1\quad (i=1,\ldots,n),\\
(A_{21})_{ij}&=(\alpha_j-\rho_1)\frac{\prod_{k\neq i}^{n}(\alpha_j+\beta_k-\rho_1-\rho_2)}{\prod_{k\neq j}^n(\alpha_j-\alpha_k)}\quad
 (i,j=1,\ldots,n),\\
(A_{31})_{j}&=-(\alpha_j-\rho_1)\frac{\prod_{k=1}^{n}(\alpha_j+\beta_k-\rho_1-\rho_2)}{\prod_{k\neq j}^n(\alpha_j-\alpha_k)}
\quad (j=1,\ldots,n),\\
(A_{32})_{j}&=-(\beta_j-\rho_1)\frac{\prod_{k=1}^n(\beta_j+\alpha_k-\rho_1-\rho_2)}{\prod_{k\neq j}^{n}(\beta_j-\beta_k)}\quad
(j=1,\ldots,n).\\
\end{split}
\end{equation}
\end{Theorem}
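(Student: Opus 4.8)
The plan is to derive both canonical forms by the recursive machinery of Katz operations from \cite{SK}, inducting on $n$; equivalently, one can verify directly that the displayed matrices realise the canonical Okubo system of the prescribed type. The starting point for the recursion is that every system in Yokoyama's list is produced from the rank-one system by finitely many operations of the form \eqref{mcadd}, so for $({\rm II}^*)_{2n}$ and $({\rm III}^*)_{2n+1}$ it suffices to exhibit the short sequence of operations carrying the explicit rank-$(n-1)$ form to the claimed rank-$n$ form, and to settle the base case $n=1$ by inspection. Since the block sizes $(n,n-1,1)$ grow by one in their first two components as $n$ increases, I expect this sequence to consist of the operation of Theorem~\ref{mcadd:eq} applied at the indices $k=1$ and $k=2$, with the parameters $c,\rho$ dictated by the prescribed shift of the local exponents $\alpha_i,\beta_i,\gamma_i,\rho_i$.

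In the inductive step I would substitute the rank-$(n-1)$ matrix $\bs{A}$, given by \eqref{eq:II*} (resp.\ \eqref{eq:III*}), into the formula for $A^{mc}$ in Theorem~\ref{mcadd:eq}. Because each diagonal block $\alpha,\beta,\gamma$ is diagonal, the blocks $A_{ik}(A_{kk}+c)(A_{kk}-\rho)^{-1}$ are mere column rescalings of the product expressions already present, while the newly created row and column blocks $\eta_j,\xi_i$ are recovered from the factorisation $\xi_i\eta_j=A_{ij}-A_{ik}(A_{kk}-\rho)^{-1}A_{kj}$. One then relabels the exponents according to the rank-$n$ convention and checks that every entry of $A^{mc}$ agrees with the corresponding entry of the target form.

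The computational core, shared by the recursion and the direct verification, is the reduction of each block identity to a Lagrange-type interpolation statement. As the diagonal blocks are diagonal, every product $A_{ik}(A_{kk}-\rho)^{-1}A_{kj}$ and every entry of a block product unwinds into a sum of the shape $\sum_m P(x_m)/\prod_{k\neq m}(x_m-x_k)$, where the nodes $x_m$ range over the $\alpha$'s or the $\beta$'s; such a sum equals a prescribed coefficient of the interpolating polynomial $P$ once $\deg P$ is compared with the number of nodes. Matching the numerator degrees in \eqref{eq:II*}--\eqref{eq:III*} against the node counts $n$ and $n-1$ is exactly what collapses these sums and reproduces the products $\prod_{k\neq i}(\beta_j+\alpha_k-\rho_1-\rho_2)$. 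I would package these as a short list of interpolation lemmas, one per block pair, and apply them uniformly; as an independent check I would use the same lemmas to confirm that the full matrix $A$ satisfies $(A-\rho_1)(A-\rho_2)=0$, which, together with the Fuchs relation, forces the eigenvalue multiplicities of $A$ to be $(m_1,m_2)=(n,n)$ for $({\rm II}^*)_{2n}$ and $(n+1,n)$ for $({\rm III}^*)_{2n+1}$ and thereby identifies the type.

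I expect the main obstacle to be the factorisation $\xi_i\eta_j=A_{ij}-A_{ik}(A_{kk}-\rho)^{-1}A_{kj}$: one must check that the reduced matrix on the right genuinely has the rank $\widetilde{n}_k$ predicted by rigidity, with row and column spaces matching the $\xi_i$ and $\eta_j$ read off from the claimed form. This is the point at which the symmetry of the characteristic exponents---the invariance of the formulas under the simultaneous permutation of the $\alpha$'s and $\beta$'s together with $\rho_1\leftrightarrow\rho_2$---must be exploited, and at which the bookkeeping of the overall scalar gauge, normalised by $(A_{13})_i=(A_{23})_i=1$, is most delicate. Tracking these scalar factors consistently through the two operations at $k=1,2$, so that the normalisations persist at rank $n$, is the step most likely to demand care.
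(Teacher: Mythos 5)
Your proposal takes essentially the same route as the paper: the paper constructs $({\rm II}^*)_{2n}$ and $({\rm III}^*)_{2n+1}$ inductively by precisely the Katz operations you describe, applied at $k=1$ and $k=2$ with exponent shifts \eqref{mcadd23*}, then relabels the characteristic exponents and determines $\xi_i,\eta_j$ from the factorization $\xi_i\eta_j=A_{ij}-A_{ik}(A_{kk}-\rho)^{-1}A_{kj}$ via the same partial-fraction (Lagrange interpolation) identities that you package as lemmas, the explicit factored form of $(\xi_2)_i(\eta_2)_j$ resolving your rank-one concern and fixing the normalization $(A_{13})_i=(A_{23})_i=1$. The only minor differences are that the paper's base case $({\rm III}^*)_3$ is obtained by a middle convolution of a rank-one equation followed by a diagonal conjugation rather than by bare inspection, and your supplementary check $(A-\rho_1)(A-\rho_2)=0$ is not needed since the spectral type is already guaranteed by Theorem~\ref{mcadd:eq}.
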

\noindent The symbol $\prod_{k \neq i}^m$ stands for the product over $k$ such that $1\leq k\leq m$ and $k\neq i$.

\begin{Theorem}
The system $({\rm IV})_{6}$ is expressed in the form 
\begin{equation}
(x-T)\frac{d}{dx}Y=
\begin{pmatrix}
\alpha&A_{12}\\
A_{21}&\beta\\
\end{pmatrix}Y
\end{equation}
where the matrices $A_{kl}$ are given as follows.
\begin{equation}\label{eq:IV}
\begin{split}
(A_{12})_{ij}&=\frac{\prod_{k\neq i}^3(\alpha_k+\alpha_4+\beta_j-\rho_1-\rho_2-\rho_3)}{\prod_{k\neq j}^{2}(\beta_j-\beta_k)}\quad (i=1,2,3;j=1,2),\\
(A_{12})_{4j}&=\frac{1}{\prod_{k\neq j}^{2}(\beta_j-\beta_k)}\quad (j=1,2),\\
(A_{21})_{ij}&=\prod_{k=1}^3(\alpha_j-\rho_k)
\frac{\prod_{k\neq i}^2(\alpha_j+\alpha_4+\beta_k-\rho_1-\rho_2-\rho_3)}
{\prod_{k\neq j}^4(\alpha_j-\alpha_k)}\quad (i=1,2;j=1,2,3),\\
(A_{21})_{i4}&=\prod_{k=1}^3(\alpha_j-\rho_k)
\frac{\prod_{k=1}^{3}\prod_{l\neq i}^2(\alpha_k+\alpha_4+\beta_{l}-\rho_1-\rho_2-\rho_3)}
{\prod_{k\neq j}^4(\alpha_j-\alpha_k)}\quad (i=1,2).\\
\end{split}
\end{equation}
\end{Theorem}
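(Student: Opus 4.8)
The plan is to realize $({\rm IV})_6$ as the output of a finite chain of the composite Katz operations \eqref{mcadd} applied to an elementary Okubo system, and to compute its matrix entries by iterating Theorem \ref{mcadd:eq}. Since the eigenvalues of $A$ are $\rho_1,\rho_2,\rho_3$, each with multiplicity two, I expect the chain to consist of three operations of the form \eqref{mcadd}, the $s$-th of which installs the exponent $\rho_s$ at infinity; this matches the threefold products $\prod_{k=1}^3(\alpha_j-\rho_k)$ appearing in \eqref{eq:IV}. First I would fix the base system and the parameters $(c,\rho,k)$ of each step, with the operated block $k$ alternating between the $\alpha$- and $\beta$-blocks, arranged so that after the three steps the type is $(n_1,n_2)=(4,2)$ and the local exponents at $t_1,t_2$ are $\alpha_1,\dots,\alpha_4$ and $\beta_1,\beta_2$. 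The distinguished role of the index $4$ in the formulas for $(A_{12})_{4j}$ and $(A_{21})_{i4}$ signals that $\alpha_4$ is precisely the exponent $\rho$ created by the last middle convolution, i.e. the new $\rho$-row and $\rho$-column produced by Theorem \ref{mcadd:eq}.

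With the chain in hand, the computation proceeds blockwise and by induction on the step. At each application of Theorem \ref{mcadd:eq} on block $k$: the diagonal blocks stay diagonal, those with $i\ne k$ shifted by $-(\rho+c)$ while the operated block acquires the new eigenvalue $\rho$ (retaining its old eigenvalues); the column-$k$ off-diagonal blocks are right-multiplied by the diagonal factor $(A_{kk}+c)(A_{kk}-\rho)^{-1}$, which scales each column by a single linear-over-linear ratio in the corresponding exponent; the row-$k$ blocks are carried unchanged; and the newly created row and column are governed by the factorization $\xi_i\eta_j=A_{ij}-A_{ik}(A_{kk}-\rho)^{-1}A_{kj}$. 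Carrying these data through the three steps, the accumulated rational factors together with the cumulative $-(\rho+c)$ shifts of the diagonal blocks combine, against the Vandermonde denominators $\prod_{k\ne j}(\alpha_j-\alpha_k)$ and $\prod_{k\ne j}(\beta_j-\beta_k)$ already present, into the single closed-form products asserted in \eqref{eq:IV}.

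The main obstacle will be the low-rank factorization step. One must verify that the right-hand side $A_{ij}-A_{ik}(A_{kk}-\rho)^{-1}A_{kj}$ has rank at most $\widetilde{n}_k-n_k$ so that matrices $\xi_i,\eta_j$ of the stated sizes exist, and then choose a normalization of this splitting that is propagated consistently from step to step; it is exactly this choice that fixes the overall diagonal gauge and yields the specific normalization $(A_{12})_{4j}=1/\prod_{k\ne j}(\beta_j-\beta_k)$ rather than some scalar multiple of it. The bookkeeping of how the rational factors and diagonal shifts from the three steps combine into the products $\prod_{k=1}^3(\alpha_j-\rho_k)$ and $\prod_{k\ne i}(\alpha_k+\alpha_4+\beta_j-\rho_1-\rho_2-\rho_3)$ is the genuinely delicate part, and I expect it to require the partial-fraction and Lagrange-interpolation identities characteristic of these rigid systems.

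Finally I would record the consistency checks that pin down the answer. By construction the diagonal blocks are $\mathrm{diag}(\alpha_1,\dots,\alpha_4)$ and $\mathrm{diag}(\beta_1,\beta_2)$; it remains to confirm that the full $6\times 6$ matrix $A$ has spectrum $\{\rho_1,\rho_2,\rho_3\}$ with multiplicities $(2,2,2)$, which I would establish by exhibiting a two-dimensional eigenspace for each $\rho_s$ (equivalently, by computing $\det(xI_6-A)=\prod_{s=1}^3(x-\rho_s)^2$ using the product structure of the entries), together with the Fuchs relation $\alpha_1+\alpha_2+\alpha_3+\alpha_4+\beta_1+\beta_2=2(\rho_1+\rho_2+\rho_3)$. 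Since a rigid irreducible Okubo system is determined up to isomorphism by its spectral type \cite{TY}, and here the type is precisely that of $({\rm IV})_6$, these checks identify the constructed matrix with $({\rm IV})_6$ up to conjugation by a diagonal matrix, the residual freedom being removed by the normalization built into the chain.
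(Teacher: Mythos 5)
Your plan is sound and runs on the same engine as the paper: realize $({\rm IV})_6$ via the composite operation \eqref{mcadd} and read off the entries from Theorem \ref{mcadd:eq}, with the new row and column fixed by a rank-one choice of $\xi,\eta$ computed by partial fractions; your observation that the index $4$ is distinguished because $\alpha_4$ is the exponent $\rho$ created by the last middle convolution is exactly right (here $\alpha_4=\rho_3$, which is why $(A_{12})_{4j}$ has trivial numerator). The differences are these. First, the paper does not rebuild a three-step chain from an elementary system: it performs a \emph{single} operation, $({\rm IV})_6={\rm add}_{(\rho_3,0)}\circ{\rm mc}_{-c-\rho_3}\circ{\rm add}_{(c,0)}({\rm III})_5$, applied to the canonical form of $({\rm III})_5$ already established in \cite{SK}, so only one $\xi\eta$-factorization needs to be verified; your chain re-derives $({\rm II})_4$ and $({\rm III})_5$ along the way, which is legitimate but redundant given \cite{SK}. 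Relatedly, your heuristic that the operated block alternates between the $\alpha$- and $\beta$-blocks is not quite right: the blocks grow as $(2,1)\to(2,2)\to(3,2)\to(4,2)$, so the last two operations both act on the $\alpha$-block, and strict alternation would produce type $(3,3)$, i.e. $({\rm II})_6$; your side condition that the final type be $(4,2)$ rescues this, but the bookkeeping must be set up accordingly. Second, your closing rigidity argument (spectrum $\{\rho_s\}$ with multiplicities $(2,2,2)$, Fuchs relation, then uniqueness of the rigid system by spectral type) is a useful consistency check but cannot carry weight in either direction: if the chain computation with the normalized $\xi,\eta$ is completed, the identification is already exact and the rigidity step is superfluous; if it is not completed, rigidity only determines $A$ up to diagonal conjugation, which is precisely the freedom that the explicit entries of \eqref{eq:IV} resolve — the normalization is fixed by the concrete choice of $\xi,\eta$ (as in the paper's $({\rm II}^*)$ computation, where e.g. $\eta_3=1$ is imposed), i.e. by the very ``delicate bookkeeping'' your proposal defers. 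The paper needs no classification input at all, since Theorem \ref{mcadd:eq} automatically preserves the Okubo form and the spectral data follow from the renaming of exponents.
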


\begin{Theorem}
The system $({\rm IV}^*)_{6}$ is expressed in the form 
\begin{equation}\label{eq:IV*}
(x-T)\frac{d}{dx}Y
=\begin{pmatrix}
\alpha_1&0&h_{212}&h_{222}&1&h_{212}h_{222}\\
0&\alpha_2&h_{112}&h_{122}&1&h_{112}h_{122}\\
h_{122}&h_{222}&\beta_1&0&1&-h_{122}h_{222}\\
h_{112}&h_{212}&0&\beta_2&1&-h_{112}h_{212}\\
-h_{112}h_{122}&-h_{212}h_{222}&-h_{112}h_{212}&-h_{122}h_{222}&\gamma_1&0\\
-1&-1&1&1&0&\gamma_2
\end{pmatrix}DY,
\end{equation}
where the notations 
\begin{equation}
h_{ijk}=\alpha_i+\beta_j+\gamma_k-2\rho_1-\rho_2,
\end{equation}
and $D={\rm diag}(a_1,a_2,b_1,b_2,c_1,c_2)$ denotes the diagonal matrix defined by
\begin{equation}
a_j=\frac{\alpha_j-\rho_1}{\prod_{k\neq j}^2(\alpha_j-\alpha_{k})},\quad bj=\frac{\beta_j-\rho_1}{\prod_{k\neq j}^2(b_j-b_{k})},
\quad c_j=\frac{\gamma_j-\rho_1}{\prod_{k\neq j}^2(\gamma_j-\gamma_{k})}\quad (j=1,2).
\end{equation}
\end{Theorem}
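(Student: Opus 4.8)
The proof follows the recursive strategy of \cite{SK} recalled in Section 2: every system in Yokoyama's list is produced from a simpler one by a Katz operation \eqref{mcadd}, whose output \eqref{eq:mcadd} is computed explicitly by Theorem \ref{mcadd:eq}. The plan is therefore to realize $({\rm IV}^*)_6$ as the last step of such a finite chain, applied to an Okubo system $\bs{A}$ of lower complexity already at our disposal, and to match the resulting $A^{mc}$ with the matrix in \eqref{eq:IV*}. Since $({\rm IV}^*)_6$ has type $(n_1,n_2,n_3)=(2,2,2)$ and $A\sim{\rm diag}(\rho_1 I_4,\rho_2 I_2)$, I would fix the distinguished index $k$ and the parameters $c,\rho$ of Theorem \ref{mcadd:eq} by the requirement that the operation yield the correct block sizes (in particular $\widetilde n_k=2$) and the correct spectrum of the full matrix; the ``dual'' partition data of the $({\rm IV})_6$ system established above indicate where the predecessor is to be found.

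The first and easier half of the matching is the local data, which is pure spectral bookkeeping. By Theorem \ref{mcadd:eq} the diagonal blocks of $A^{mc}$ with $i\neq k$ are $A_{ii}-(\rho+c)I_{n_i}$, while the $k$-th block splits into a copy of $A_{kk}$ and a new block with exponent $\rho$. I would check that, under the labeling $\alpha_i,\beta_i,\gamma_i$ fixed in Section 3, these reproduce the diagonal data of \eqref{eq:IV*}, that the conditions \eqref{assumedet1} and the Fuchs relation are inherited, and that the middle-convolution step produces the spectrum $\{\rho_1,\rho_2\}$ with multiplicities $(4,2)$, so that the type is indeed $({\rm IV}^*)_6$.

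The substantive step, which I expect to be the main obstacle, is the explicit form of the off-diagonal blocks and the appearance of the diagonal right factor $D$. In Theorem \ref{mcadd:eq} these blocks are assembled from $A_{ij}-A_{ik}(A_{kk}-\rho)^{-1}A_{kj}$ together with a factorization $\xi_i\eta_j=A_{ij}-A_{ik}(A_{kk}-\rho)^{-1}A_{kj}$, and the freedom in choosing $\xi_i,\eta_j$ is exactly a gauge by ${\rm GL}(\widetilde n_k)$ on the newly created block. The whole difficulty is to fix this gauge so that the entries collapse into the symmetric pattern of \eqref{eq:IV*}: the scalar products $h_{212}h_{222},\,-h_{112}h_{122},\dots$ in the last row and last column should appear precisely as the rank-one contributions $\xi_i\eta_j$, with $h_{ijk}=\alpha_i+\beta_j+\gamma_k-2\rho_1-\rho_2$ descending from linear factors already present in the predecessor. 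The factor $D={\rm diag}(a_1,a_2,b_1,b_2,c_1,c_2)$ then has a transparent origin: its entries collect the residue factors $\prod_{l\neq j}(\alpha_j-\alpha_l)^{-1}$ and the factors $\alpha_j-\rho_1$ (and their $\beta,\gamma$ analogues) coming from the explicit diagonalization of the diagonal blocks and from the inverse $(A_{kk}-\rho)^{-1}$, and pulling them out on the right is exactly what leaves the clean matrix $M$ in \eqref{eq:IV*}.

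Finally, having matched the diagonal blocks, the spectrum of $A$, and all off-diagonal entries, I would invoke irreducibility together with the rigidity of Yokoyama's class to conclude that the constructed system is the canonical $({\rm IV}^*)_6$, with $D$ normalized exactly as dictated by \cite{SK}. As an independent consistency check one may verify \eqref{eq:IV*} directly: confirm that $A=MD$ has diagonal blocks that are diagonal with distinct entries and characteristic polynomial $(\lambda-\rho_1)^4(\lambda-\rho_2)^2$, after which rigidity guarantees canonicity. The genuinely hard part is the bookkeeping producing the precise product-form off-diagonal entries together with the factor $D$; the spectral verifications are routine.
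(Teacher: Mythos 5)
Your proposal takes essentially the same route as the paper: the paper's proof is precisely the one-step Katz construction $({\rm IV}^*)_6={\rm add}_{(0,0,\rho_1)}\circ{\rm mc}_{-c-\rho_1}\circ{\rm add}_{(0,0,c)}({\rm III}^*)_5$ (so the predecessor is the canonical $({\rm III}^*)_5$ with distinguished index $k=3$, as your duality heuristic predicts), followed by a direct computation of $A^{mc}$ via Theorem \ref{mcadd:eq} with an appropriate choice of the factorization $\xi,\eta$ to match \eqref{eq:IV*} --- exactly your plan, down to the gauge freedom in $\xi,\eta$ (which here is only a ${\rm GL}(1)$ scalar, since $\widetilde n_3-n_3=1$). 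Your closing appeal to irreducibility and rigidity is superfluous, since the theorem asserts an explicit matrix identity verified entrywise, but it is harmless.
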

%\noindent In what follows we prove these Theorems.
\begin{proof}[Proof of Theorems 3.1, 3.2 and 3.3] 
{\bf Case} ${\rm II}^*$,${\rm III}^*$:
 Firstly, the system $({\rm III}^*)_3$ is obtained by ${\rm mc}_{\mu_1}$ from the differential equation
\begin{equation}
\frac{dy}{dx}=\left(\frac{\alpha_1^{(1)}}{x-t_1}+\frac{\beta_1^{(1)}}{x-t_2}+
\frac{\gamma_1^{(1)}}{x-t_3}\right)y.
\end{equation}
The resulting system is given by
\begin{equation}
(x-T)\frac{d}{dx}Y
=\begin{pmatrix}
\alpha_1 &\beta_1-\rho_1 &\rho_1+\rho_2-\alpha_1-\beta_1\\
\alpha_1-\rho_1&\beta_1&\rho_1+\rho_2-\alpha_1-\beta_1 \\
\alpha_1-\rho_1&\beta_1-\rho_1&\gamma
\end{pmatrix}Y
\end{equation}
with characteristic exponents specified by
\begin{equation}
\begin{cases}
\alpha_1=\alpha_1^{(1)}+\mu_1,\quad \beta_1=\beta_1^{(1)}+\mu_1,\quad \gamma=\gamma_1^{(1)}+\mu_1,\\
\rho_1=\mu_1,\quad \rho_2=\alpha_1^{(1)}+\beta_1^{(1)}+\gamma_1^{(1)}+\mu_1.
\end{cases}
\end{equation}
It can be transformed into the canonical form
\begin{equation}
({\rm III}^*)_3=
\begin{pmatrix}
\alpha_1 &\beta_1-\rho_1 &1\\
\alpha_1-\rho_1&\beta_1&1 \\
\frac{\alpha_1-\rho_1}{\rho_1+\rho_2-\alpha_1-\beta_1}&
\frac{\beta_1-\rho_1}{\rho_1+\rho_2-\alpha_1-\beta_1}&\gamma
\end{pmatrix}
\end{equation}
 by conjugation with the diagonal matrix
\begin{equation}
{\rm diag}(1,\,1,\,\rho_1+\rho_2-\alpha_1-\beta_1).
\end{equation}

Starting from the $({\rm III}^*)_3$, the Okubo systems $({\rm II}^*)_{2n}$ and $({\rm III}^*)_{2n+1}$ can be constructed inductively by the following Katz operations:
\begin{equation}\label{mcadd23*}
\begin{split}
({\rm II}^*)_{2n}&={\rm add}_{(\rho_2,0,0)}\circ{\rm mc}_{-a_{n-1}-\rho_2}\circ{\rm add}_{(a_{n-1},0,0)}({\rm III}^*)_{2n-1},\\
({\rm III}^*)_{2n+1}&={\rm add}_{(0,\rho_2,0)}\circ{\rm mc}_{-b_{n}-\rho_2}\circ{\rm add}_{(0,b_{n},0)}({\rm II}^*)_{2n}.
\end{split}
\end{equation}
Therefore, by Theorem \ref{mcadd:eq} we obtain canonical forms of the Okubo system types $({\rm II}^*)_{2n}$ and $({\rm III}^*)_{2n+1}$ inductively.
We remark that the construction of the system is equivalent to the computation of $\xi$ and  $\eta$ of Theorem \ref{mcadd:eq}.

Assume that the system $({\rm III}^*)_{2n-1}$ is given as in our canonical form of Theorem 3.1.
By Theorem \ref{mcadd:eq}, the Katz operation \eqref{mcadd23*}
for $({\rm III^*})_{2n-1}$ gives rise to the Okubo system $(x-T)\frac{d}{dx}W = A^{mc}W$ 
where
\begin{equation}\label{II*}
A^{mc}=\begin{pmatrix}
\alpha&0&A_{12}&A_{13}\\
0&\rho_2&\eta_2&\eta_3\\
A_{21}(\alpha+a_{n-1})(\alpha-\rho_2)^{-1}&(\rho_2+a_{n-1})\xi_2&\beta-a_{n-1}-\rho_2&A_{23}\\
A_{31}(\alpha+a_{n-1})(\alpha-\rho_2)^{-1}&(\rho_2+a_{n-1})\xi_3&A_{32}&\gamma-a_{n-1}-\rho_2
\end{pmatrix}.
\end{equation}
Renaming the characteristic exponents of \eqref{II*} as
\begin{equation}
\begin{cases}
\alpha_i^{(2n)}=\alpha_i\quad(1\leq i \leq n-1),\quad 
\alpha_{n}^{(2n)}=\rho_2,\\
\beta_i^{(2n)}=\beta_i-\rho_2-a_{n-1}\quad(1\leq i\leq n),\\
\gamma^{(2n)}=\gamma-a_{n-1}-\rho_2,\quad
\rho_1^{(2n)}=-a_{n-1}, \quad \rho_2^{(2n)}=\rho_1,
\end{cases}
\end{equation}
we see that $A^{mc}$ is in the canonical form of type $({\rm II}^*)_{2n}$ except for the vectors $\xi_k,\eta_l$ to be determined by the formula
%Remarking that $\xi_i,\eta_j$ satisfy 
\begin{equation}
\xi_k\eta_l=A_{kl}-A_{k1}(A_{11}-\rho_2)A_{1l}.
\end{equation}
The values of $(\xi_k)_i(\eta_l)_j$ are computed as follows by partial fractional expansions:
\begin{equation}
\begin{split}
(\xi_2)_i(\eta_2)_j&=(\beta_i-\rho_2)\delta_{ij}-\sum_{p=1}^{n-1}
\frac{\alpha_p-\rho_1}{\alpha_p-\rho_2}\frac{\prod_{k\neq i}^{n-1}(\alpha_p+\beta_k-\rho_1-\rho_2)}{\prod_{k\neq p}^{n-1}(\alpha_p-\alpha_k)}
(\beta_j-\rho_1)\frac{\prod_{k\neq p}^{n-1}(\beta_j+\alpha_k-\rho_1-\rho_2)}{\prod_{k\neq j}^{n-1}(\beta_j-\beta_k)}\\
&=(\beta_i-\rho_2)\delta_{ij}-\frac{(\beta_j-\rho_1)}{\prod_{k\neq j}^{n-1}(\beta_j-\beta_k)}\sum_{p=1}^{n-1}
\frac{\alpha_p-\rho_1}{\alpha_p-\rho_2}\frac{\prod_{k\neq i}^{n-1}(\alpha_p+\beta_k-\rho_1-\rho_2)}{\prod_{k\neq p}^{n-1}(\alpha_p-\alpha_k)}
\prod_{k\neq p}^{n-1}(\beta_j+\alpha_k-\rho_1-\rho_2)\\
&=(\beta_i-\rho_2)\delta_{ij}-\frac{(\beta_j-\rho_1)\prod_{k=1}^{n-1}(\beta_j+\alpha_k-\rho_1-\rho_2)}{\prod_{k\neq j}^{n-1}(\beta_j-\beta_k)}
\sum_{p=1}^{n-1}
\frac{\alpha_p-\rho_1}{\alpha_p-\rho_2}\frac{\prod_{k\neq i,j}^{n-1}(\alpha_p+\beta_k-\rho_1-\rho_2)}{\prod_{k\neq p}^{n-1}(\alpha_p-\alpha_k)}
\\
&=(\beta_i-\rho_2)\delta_{ij}-\frac{(\beta_j-\rho_1)\prod_{k=1}^{n-1}(\beta_j+\alpha_k-\rho_1-\rho_2)}{\prod_{k\neq j}^{n-1}(\beta_j-\beta_k)}
\sum_{p=1}^{n-1}
\frac{\prod_{k\neq i,j}^{n}(\alpha_p'+\beta_k'-\rho_1'-\rho_2')}{\prod_{k\neq p}^{n}(\alpha_p'-\alpha_k')}
\\
&=\frac{(\beta_j-\rho_1)\prod_{k=1}^{n-1}(\beta_j+\alpha_k-\rho_1-\rho_2)}{\prod_{k\neq j}^{n-1}(\beta_j-\beta_k)}
\frac{\prod_{k\neq i,j}^{n}(\alpha_n'+\beta_k'-\rho_1'-\rho_2')}{\prod_{k\neq n}^{n}(\alpha_n'-\alpha_k')}
\\
&=(\rho_2-\rho_1)\frac{(\beta_j-\rho_1)\prod_{k=1}^{n-1}(\beta_j+\alpha_k-\rho_1-\rho_2)}{\prod_{k\neq j}^{n-1}(\beta_j-\beta_k)}
\frac{\prod_{k\neq i,j}^{n-1}(\beta_k-\rho_1)}{\prod_{k\neq n}^{n}(\rho_2-\alpha_k)}
\end{split}
\end{equation}
where $\alpha_i=\alpha_i'$, $\beta_i=\beta_i'+\alpha_{n}'-\beta_{n}'$,  $\rho_1=\rho_1'+\rho_2'-\beta_{n}'$, $\rho_2=\alpha_{n}'$, and
\begin{equation}
\xi_3\eta_3=\gamma-\rho_2-\sum_{p=1}^{n-1}
\frac{\alpha_p-\rho_1}{\alpha_p-\rho_2}\frac{\prod_{k=1}^{n-1}(\alpha_p+\beta_k-\rho_1-\rho_2)}{\prod_{k\neq p}^{n-1}(\alpha_p-\alpha_k)}
=\frac{\prod_{k=1}^{n-1}(\beta_k-\rho_1)}{\prod_{k=1}^{n-1}(\rho_2-\alpha_k)}.
\end{equation}
%To give $\xi_3\eta_3$, we use the expansion of $\frac{\prod_{k=0}^{n}(x-p_k)}{\prod_{k=1}^n(x-q_k)}$. 
Therefore we can choose $(\xi_2)_i,(\eta_2)_j,\xi_3,\eta_3$ as 
\begin{equation}
\begin{split}
(\xi_2)_i&=(\rho_2-\rho_1)\frac{\prod_{k\neq i}^{n-1}(\beta_k-\rho_1)}{\prod_{k=1}^{n-1}(\rho_2-\alpha_k)},
\quad (\eta_2)_j=\frac{\prod_{k=1}^{n-1}(\beta_j+\alpha_k-\rho_1-\rho_2)}{\prod_{k\neq j}^{n-1}(\beta_j-\beta_k)},\\
 \xi_3&=\frac{\prod_{k=1}^{n-1}(\beta_k-\rho_1)}{\prod_{k=1}^{n-1}(\rho_2-\alpha_k)},
\quad \eta_3=1,
\end{split}
\end{equation}
so that $A^{mc}$ gives rise to the canonical form of type $({\rm II}^*)_{2n}$. Similarly, applying the Katz operation to the canonical form of $({\rm II}^*)_{2n}$, we obtain the canonical form of type $({\rm III}^*)_{2n+1}$.
\par\medskip

\noindent
{\bf Case} ${\rm IV}$:
The Okubo system of type  $({\rm IV})_6$ is constructed by
\begin{equation}
({\rm IV})_6={\rm add}_{(\rho_3,0)}\circ{\rm mc}_{-c-\rho_3}\circ{\rm add}_{(c,0)}({\rm III})_5.
\end{equation}
One can directly verify that the the canonical form of type $({\rm IV})_6$ is 
obtained from the canonical form of type $({\rm III})_5$ in \cite{SK} by choosing $\xi$ and $\eta$ appropriately.
\par\medskip

\noindent
{\bf Case} ${\rm IV^*}$:
The Okubo system of type  $({\rm IV}^*)_6$ is constructed by
\begin{equation}
({\rm IV}^*)_6={\rm add}_{(0,0,\rho_1)}\circ{\rm mc}_{-c-\rho_1}\circ{\rm add}_{(0,0,c)}({\rm III}^*)_5.
\end{equation}
One can directly verify that the the canonical form of type $({\rm IV}^*)_6$ is 
obtained from the canonical form of type $({\rm III}^*)_5$ of Theorem 3.1 by choosing $\xi$ and $\eta$ appropriately.
\end{proof}

\section{Construction of connection coefficients of Okubo system}
\subsection{Main results}
%In \cite{SK}, we determined the explicit values of connection coefficients for canonical solution matrix of types ${\rm I}$,${\rm I}^*$,${\rm II }$,${\rm III}$ of Yokoyama's list. 
In this section we present explicit formulas for the connection matrices $C_{kl}$ of types ${\rm II}^*$,${\rm III}^*$,${\rm IV}$,${\rm IV}^*$ corresponding to the canonical forms of  Theorems 3.1, 3.2, 3.3.
\par\medskip

\begin{Theorem}
For the 
Okubo systems \eqref{eq:II*} and \eqref{eq:III*} of types  $({\rm II}^*)_{2n}$ and $({\rm III}^*)_{2n+1}$, 
the connection coefficients of the canonical solution matrices 
are expressed as follows:
\par\medskip
\noindent
$({\rm II}^*)_{2n}$
\begin{equation}
\begin{split}
(C_{12}^{(2n)})_{ij}
&=\displaystyle (-1)^{n-1}
e(\tfrac{1}{2}(\rho_1+\rho_2-\alpha_i-\beta_j-\gamma)
\frac{(t_1-t_2)^{\rho_1+\rho_2-\alpha_i-\gamma}}
{(t_2-t_1)^{\rho_1+\rho_2-\beta_j-\gamma}}
\left(\frac{t_1-t_3}{t_2-t_3}\right)^{\rho_1+\rho_2-\alpha_i-\beta_j}\\
&\frac{\Gamma(-\alpha_i)\Gamma(\beta_j+1)}{\Gamma(1+\rho_1-\alpha_i)\Gamma(1+\rho_2-\alpha_i)}
\frac{\prod_{k\neq j}^{n-1}\Gamma(\beta_j-\beta_k)}
{\prod_{k\neq i}^n\Gamma(\beta_j+\alpha_{k}-\rho_1-\rho_2)}
\frac{\prod_{k\neq i}^n\Gamma(1+\alpha_k-\alpha_i)}
{\prod_{k\neq j}^{n-1}\Gamma(1+\rho_1+\rho_2-\alpha_i-\beta_k)}
\end{split}
\end{equation}

\begin{equation}
\begin{split}
(C_{13}^{(2n)})_{i}
&=(-1)^{n}e(\tfrac{1}{2}(\rho_1+\rho_2-\alpha_i-\beta_1-\gamma)
\frac{(t_1-t_3)^{\rho_1+\rho_2-\alpha_i-\beta_1}}
{(t_3-t_1)^{\rho_1+\rho_2-\beta_1-\gamma}}
\left(\frac{t_1-t_2}{t_3-t_2}\right)^{\rho_1+\rho_2-\alpha_i-\gamma}\\
&(\rho_1+\rho_2-\alpha_i-\beta_1)^{-1}
\frac{\Gamma(\gamma+1)\Gamma(-\alpha_i)}
{\Gamma(1+\rho_1-\alpha_i)\Gamma(1+\rho_2-\alpha_i)}
\frac{\prod_{k\neq i}^n\Gamma(1+\alpha_k-\alpha_i)}{\prod_{k=1}^{n-1}\Gamma(1+\rho_1+\rho_2-\alpha_i-\beta_k)}
\end{split}
\end{equation}

\begin{equation}
\begin{split}
(C_{23}^{(2n)})_i
&=(-1)^{n}
e(\tfrac{1}{2}(\rho_1+\rho_2-\alpha_1-\beta_i-\gamma))
\frac{(t_2-t_3)^{\rho_1+\rho_2-\alpha_1-\beta_i}}
{(t_3-t_2)^{\rho_1+\rho_2-\alpha_1-\gamma}}
\left(\frac{t_2-t_1}{t_3-t_1}\right)^{\rho_1+\rho_2-\beta_i-\gamma}\\
&(\rho_1+\rho_2-\alpha_1-\beta_i)^{-1}
\Gamma(\gamma+1)\Gamma(-\beta_i)
\frac{\prod_{k\neq i}^{n-1}\Gamma(1+\beta_k-\beta_i)}
{\prod_{k=1}^n\Gamma(1+\rho_1+\rho_2-\alpha_k-\beta_i)}
\end{split}
\end{equation}

\begin{equation}
\begin{split}
(C_{21}^{(2n)})_{ij}
&=\displaystyle (-1)^{n}
e(\tfrac{-1}{2}(\alpha_j+\beta_i+\gamma-\rho_1-\rho_2)
\frac{(t_2-t_1)^{\rho_1+\rho_2-\beta_i-\gamma}}
{(t_1-t_2)^{\rho_1+\rho_2-\alpha_j-\gamma}}
\left(\frac{t_2-t_3}
{t_1-t_3}\right)^{\rho_1+\rho_2-\alpha_i-\beta_j}\\
&\frac{\Gamma(\alpha_j+1)\Gamma(-\beta_i)}{\Gamma(1+\rho_1-\beta_j)
\Gamma(1+\rho_2-\beta_j)}
\frac{\prod_{k\neq i}^{n-1}\Gamma(\beta_k-\beta_i+1)}
{\prod_{k\neq j}^n\Gamma(1+\rho_1+\rho_2-\beta_i-\alpha_{k})}
\frac{\prod_{k\neq j}^n\Gamma(\alpha_j-\alpha_k))}
{\prod_{k\neq i}^{n-1}\Gamma(\alpha_j+\beta_k-\rho_1-\rho_2)}
\end{split}
\end{equation}

\begin{equation}
\begin{split}
(C_{31}^{(2n)})_{j}
&=(-1)^{n}e(\tfrac{1}{2}(\rho_1+\rho_2-\alpha_j-\beta_1-\gamma)
\frac{(t_3-t_1)^{\rho_1+\rho_2-\alpha_j-\beta_1}}
{(t_1-t_3)^{\rho_1+\rho_2-\beta_1-\gamma}}
\left(\frac{t_3-t_2}{t_1-t_2}\right)^{\rho_1+\rho_2-\alpha_j-\gamma}\\
&(\rho_1+\rho_2-\alpha_i-\beta_1)
\frac{\Gamma(-\gamma)\Gamma(\alpha_j+1)}
{\Gamma(\alpha_j-\rho_1)\Gamma(\alpha_j-\rho_2)}
\frac{\prod_{k\neq j}^n\Gamma(\alpha_j-\alpha_k)}
{\prod_{k=1}^{n-1}\Gamma(\alpha_j+\beta_k-\rho_1-\rho_2)}
\end{split}
\end{equation}

\begin{equation}
\begin{split}
(C_{32}^{(2n)})_j
&=(-1)^{n}
e(\tfrac{1}{2}(\rho_1+\rho_2-\alpha_1-\beta_j-\gamma))
\frac{(t_3-t_2)^{\rho_1+\rho_2-\alpha_1-\gamma}}
{(t_2-t_3)^{\rho_1+\rho_2-\alpha_1-\beta_j}}
\left(\frac{t_3-t_1}{t_2-t_1}\right)^{\rho_1+\rho_2-\beta_j-\gamma}\\
&(\rho_1+\rho_2-\alpha_1-\beta_j)
\Gamma(-\gamma)\Gamma(\beta_j+1)
\frac{\prod_{k\neq j}^{n-1}\Gamma(\beta_j-\beta_k)}
{\prod_{k=1}^n\Gamma(\alpha_k+\beta_j-\rho_1-\rho_2)}
\end{split}
\end{equation}

$({\rm III}^*)_{2n+1}$
\begin{equation}
\begin{split}
(C_{12}^{(2n+1)})_{ij}
&=(-1)^{n}
e(\tfrac{1}{2}(\rho_1+\rho_2-\alpha_i-\beta_j-\gamma)
\frac{(t_1-t_2)^{\rho_1+\rho_2-\alpha_i-\gamma}}
{(t_2-t_1)^{\rho_1+\rho_2-\beta_j-\gamma}}
\left(\frac{t_1-t_3}
{t_2-t_3}\right)^{\rho_1+\rho_2-\alpha_i-\beta_j}\\
&\frac{\Gamma(-\alpha_i)\Gamma(\beta_j+1)}{\Gamma(1+\rho_1-\alpha_i)\Gamma(\beta_j-\rho_1)}
\frac{\prod_{k\neq j}^n\Gamma(\beta_j-\beta_k)}
{\prod_{k\neq i}^n\Gamma(\beta_j+\alpha_{k}-\rho_1-\rho_2)}
\frac{\prod_{k\neq i}^n\Gamma(1+\alpha_k-\alpha_i)}
{\prod_{k\neq j}^n\Gamma(1+\rho_1+\rho_2-\alpha_i-\beta_k)}
\end{split}
\end{equation}

\begin{equation}
\begin{split}
(C_{13}^{(2n+1)})_{i}
&=(-1)^{n}e(\tfrac{1}{2}(\rho_1+\rho_2-\alpha_i-\beta_1-\gamma)
\frac{(t_1-t_3)^{\rho_1+\rho_2-\alpha_i-\beta_1}}
{(t_3-t_1)^{\rho_1+\rho_2-\beta_1-\gamma}}
\left(\frac{t_1-t_2}{t_3-t_2}\right)^{\rho_1+\rho_2-\alpha_i-\gamma}\\
&%(\rho_1+\rho_2-\alpha_i-\beta_1)^{-1}
\frac{\Gamma(\gamma+1)\Gamma(-\alpha_i)}{\Gamma(1+\rho_1-\alpha_i)}
\frac{\prod_{k\neq i}^n\Gamma(1+\alpha_k-\alpha_i)}{\prod_{k=1}^n\Gamma(1+\rho_1+\rho_2-\alpha_i-\beta_k)}
\end{split}
\end{equation}

\begin{equation}
\begin{split}
(C_{23}^{(2n+1)})_i
&=(-1)^{n}
e(\tfrac{1}{2}(\rho_1+\rho_2-\alpha_1-\beta_i-\gamma))
\frac{(t_2-t_3)^{\rho_1+\rho_2-\alpha_1-\beta_i}}
{(t_3-t_2)^{\rho_1+\rho_2-\alpha_1-\gamma}}
\left(\frac{t_2-t_1}{t_3-t_1}\right)^{\rho_1+\rho_2-\beta_i-\gamma}
\\
&
\frac{\Gamma(\gamma+1)\Gamma(-\beta_i)}
{\Gamma(1+\rho_1-\beta_i)}
\frac{\prod_{k\neq i}^n\Gamma(1+\beta_k-\beta_i)}
{\prod_{k=1}^n\Gamma(1+\rho_1+\rho_2-\alpha_k-\beta_i)}\\
\end{split}
\end{equation}

\begin{equation}
\begin{split}
(C_{21}^{(2n+1)})_{ij}
&=(-1)^{n}
e(\tfrac{-1}{2}(\rho_1+\rho_2-\alpha_j-\beta_i-\gamma)
\frac{(t_2-t_1)^{\beta_i+\gamma-\rho_1-\rho_2}}{(t_1-t_3)^{\rho_1+\rho_2-\alpha_i-\beta_j}}
\left(\frac{t_2-t_3}
{t_1-t2}\right)^{\rho_1+\rho_2-\alpha_j-\gamma}\\
&\frac{\Gamma(\alpha_j+1)\Gamma(-\beta_i)}{\Gamma(\alpha_j-\rho_1)\Gamma(1+\rho_1-\beta_i)}
\frac{\prod_{k\neq i}^n\Gamma(1+\beta_k-\beta_j)}
{\prod_{k\neq j}^n\Gamma(1+\rho_1+\rho_2-\beta_i-\alpha_{k})}
\frac{\prod_{k\neq j}^n\Gamma(\alpha_j-\alpha_k)}
{\prod_{k\neq i}^n\Gamma(\alpha_j+\beta_k-\rho_1-\rho_2)}
\end{split}
\end{equation}
\end{Theorem}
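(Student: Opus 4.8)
The plan is to prove the formulas of this theorem by induction along the chain of Katz operations \eqref{mcadd23*}, using the recurrence of Theorem \ref{conn:mcadd} to propagate the connection coefficients from one system to the next, and using the permutation symmetry of the canonical forms to pin down exactly those coefficients that the recurrence leaves undetermined.

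First I would treat the base case $({\rm III}^*)_3$. Since $({\rm III}^*)_3$ is obtained from a rank-one equation by a single middle convolution, its canonical solution matrix is built from Gauss hypergeometric functions, so the connection coefficients $C_{12},C_{13},C_{23}$ and their transposed counterparts between the three regular singular points are given by the classical Gauss connection formula, i.e.\ ratios of $\Gamma$-functions dressed with the phase factors $e(\pm\tfrac12(\cdots))$ and the power factors $(t_i-t_j)^{\cdots}$. I would check directly that these agree with the asserted formulas specialized to $n=1$.

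For the inductive step, assume the formulas hold for $({\rm III}^*)_{2n-1}$. Applying \eqref{mcadd23*} with $k=1$ produces $({\rm II}^*)_{2n}$, and Theorem \ref{conn:mcadd} expresses the transformed coefficients $C^{mc}_{ij}$, $C^{mc}_{i(k1)}$, $C^{mc}_{(k1)j}$ (those for which neither end is the newly created coordinate) through \eqref{cij}, \eqref{Ci(k1)}, \eqref{C(k1)j}. Substituting the inductive hypothesis and performing the renaming of exponents recorded in the proof of Theorem 3.1, the recurrence collapses, after standard $\Gamma$-function identities and reindexing of the products $\prod_{k\neq i}$, $\prod_{k\neq j}$, into precisely the rows and columns of $C_{12}^{(2n)},\dots,C_{32}^{(2n)}$ carrying indices $\le n-1$. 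Running the same computation once more with $k=2$ passes from $({\rm II}^*)_{2n}$ to $({\rm III}^*)_{2n+1}$. This portion is bookkeeping; I expect no conceptual difficulty, only care with the phase and power factors, whose signs depend on the relative order of the singular indices through the two cases of \eqref{cij}--\eqref{C(k1)j}.

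The genuine obstacle is the coefficients $C_{i(k2)}$ and $C_{(k2)j}$ attached to the coordinate created by the middle convolution, that is, the last row and column ($i=n$, resp.\ $j=n$) of the connection matrices, which Theorem \ref{conn:mcadd} does not determine (cf.\ the remark following it). Here I would exploit the symmetry of the canonical form: inspection of \eqref{eq:II*} and \eqref{eq:III*} shows that simultaneously permuting the exponents $\alpha_1,\dots,\alpha_n$ together with the rows and columns of the first diagonal block, and likewise permuting $\beta_1,\dots$ with the second block, leaves the Okubo system invariant, since every entry of $A_{12},A_{21},A_{31},A_{32}$ is assembled from the symmetric products $\prod_{k\neq i}$ and $\prod_{k\neq j}$. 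Such a permutation preserves the normalization $F_j(t_j)=(0,I_{n_j},0)^t$, so the canonical solution matrix transforms equivariantly and the connection matrices transform by the corresponding row and column permutations. In particular the transposition exchanging the new exponent $\alpha_n$ with an old $\alpha_i$ ($i\le n-1$) identifies the undetermined $n$-th row or column with one already obtained from the recurrence, \emph{after} relabelling the exponents; since the established formulas are manifestly the correct symmetric functions of $\alpha_1,\dots,\alpha_n$, evaluating them at $i=n$ delivers the missing coefficients and closes the induction. The delicate point to verify is that the value $\alpha_n=\rho_2^{(2n-1)}$ assigned to the created coordinate does not obstruct the argument: the explicit canonical forms of Theorem 3.1 are symmetric in all the $\alpha_i$ as functions of free parameters, so the symmetry may be invoked irrespective of the particular value that the new coordinate happens to carry.
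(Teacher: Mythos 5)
Your proposal is correct and follows essentially the same route as the paper: the base case $({\rm III}^*)_3$ obtained from the rank-one equation by middle convolution (whose coefficients the paper takes from the equivalence with $({\rm I}^*)_3$ in \cite{SK}), propagation along the chain \eqref{mcadd23*} via the recurrences of Theorem \ref{conn:mcadd}, and the exponent-permutation symmetry of the canonical forms, implemented through Lemma \ref{lem:sym} with $A={\rm Ad}(D)\sigma_{1i}^{\alpha}\sigma_{1j}^{\beta}(A)$, $D={\rm diag}(S_{1i},S_{1j},1)$, to recover the coefficients $C_{i(k2)}$, $C_{(k2)j}$ that the recurrence leaves undetermined. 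The only organizational difference is that the paper telescopes just the $(1,1)$-entries $(C_{12}^{(2n+1)})_{11}$ and $(C_{13}^{(2n+1)})_{1}$ through the entire chain and invokes the symmetry once at the end to generate all $(i,j)$-entries, whereas you interleave the symmetry step into each stage of the induction; both are valid and rest on the same two ingredients.
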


\begin{equation}
\begin{split}
(C_{31}^{(2n+1)})_{j}
&=(-1)^{n}e(\tfrac{1}{2}(\rho_1+\rho_2-\alpha_j-\beta_1-\gamma)
\frac{(t_3-t_1)^{\rho_1+\rho_2-\alpha_j-\beta_1}}{(t_1-t_3)^{\rho_1+\rho_2-\beta_1-\gamma}}
\left(\frac{t_3-t_2}{t_1-t_2}\right)^{\rho_1+\rho_2-\alpha_j-\gamma} \\
&%(\rho_1+\rho_2-\alpha_i-\beta_1)
\frac{\Gamma(-\gamma)\Gamma(\alpha_j+1)}{\Gamma(\alpha_j-\rho_1)}
\frac{\prod_{k\neq j}^n\Gamma(\alpha_j-\alpha_k)}{\prod_{k=1}^n
\Gamma(\alpha_j+\beta_k-\rho_1-\rho_2)}
\end{split}
\end{equation}

\begin{equation}
\begin{split}
(C_{32}^{(2n+1)})_j
&=(-1)^{n}
e(\tfrac{1}{2}(\rho_1+\rho_2-\alpha_1-\beta_i-\gamma))
\frac{(t_3-t_2)^{\rho_1+\rho_2-\alpha_1-\gamma}}
{(t_2-t_3)^{\rho_1+\rho_2-\alpha_1-\beta_i}}
\left(\frac{t_3-t_1}{t_2-t_1}\right)^{\rho_1+\rho_2-\beta_i-\gamma}
\\
&%(\rho_1+\rho_2-\alpha_1-\beta_i)
\frac{\Gamma(-\gamma)\Gamma(\beta_j+1)}
{\Gamma(\beta_j-\rho_1)}
\frac{\prod_{k\neq j}^n\Gamma(\beta_j-\beta_k)}
{\prod_{k=1}^n\Gamma(\alpha_k+\beta_j-\rho_1-\rho_2)}\\
\end{split}
\end{equation}
\begin{Theorem}
For the Okubo systems \eqref{eq:IV} of type  $({\rm IV})_6$, the connection coefficients of the canonical solution matrix are expressed as follows.
\begin{equation}
\begin{split}
&(C_{12})_{ij}=
\ds{e(\tfrac{1}{2}(-\alpha_i-\beta_j))
\frac{(t_2-t_1)^{\rho_3-\alpha_i}}{(t_1-t_2)^{\rho_3-\beta_j}}
\frac{\Gamma(-\alpha_{i})\Gamma(\beta_j+1)}{
\prod_{k=1}^3\Gamma(1+\rho_k-\alpha_i)}}
\\
&\ds{
\frac{\prod_{k\neq j}^2\Gamma(\beta_j-\beta_{k})
\prod_{k\neq i,4}^{4}\Gamma(1+\alpha_{k}-\alpha_i)}
{\prod_{k\neq i}^3  \Gamma(\beta_j+\alpha_{k}+\alpha_4-\rho_1-\rho_2-\rho_3)
\prod_{k\neq j}^2\Gamma(1+\rho_1+\rho_2+\rho_3-\alpha_i-\alpha_4-\beta_k)}
\quad (i=1,2,3;j=1,2).}
\end{split}
\end{equation}

\begin{equation}
\begin{split}
&(C_{12})_{4j}=\ds{\frac{e(\tfrac{1}{2}(-\alpha_4-\beta_j))}{\prod_{k\neq j}^2\prod_{k\neq 1}^3(\alpha_1+\alpha_k+\beta_j-\rho_1-\rho_2-\rho_3)}
\frac{(t_2-t_1)^{\rho_3-\alpha_4}}{(t_1-t_2)^{\rho_3-\beta_j}}
\frac{\Gamma(-\alpha_{4})\Gamma(\beta_j+1)}{
\prod_{k=1}^3\Gamma(1+\rho_k-\alpha_4)}}
\\
&
\ds{
\frac{\prod_{k\neq j}^2\Gamma(\beta_j-\beta_{k})
\prod_{k\neq 4}^{4}\Gamma(1+\alpha_{k}-\alpha_i)}
{\prod_{k\neq 1,4}^4  \Gamma(\beta_j+\alpha_{k}+\alpha_1-\rho_1-\rho_2-\rho_3)
\prod_{k\neq j}^2\Gamma(1+\rho_1+\rho_2+\rho_3-\alpha_1-\alpha_4-\beta_k)}}
\quad  (j=1,2).
\end{split}
\end{equation}

\begin{equation}
\begin{split}
&(C_{21})_{ij}
=
-\ds{e(\tfrac{1}{2}(\alpha_j+\alpha_4+\beta_i-2\rho_3))
\frac{(t_2-t_1)^{\alpha_4-\beta_i}}{(t_1-t_2)^{\alpha_4-\alpha_j}}
\frac{\Gamma(-\beta_{i})\Gamma(\alpha_j+1)}{\Gamma(\alpha_j-\rho_1)
\Gamma(\alpha_j-\rho_2)\Gamma(\alpha_j-\rho_3)}}\\
&
\ds{\frac{\prod_{k\neq j}^4\Gamma(\alpha_j-\alpha_{k})
\prod_{k\neq i}^2\Gamma(1+\beta_{k}-\beta_i)}
{\prod_{k\neq i}^2\Gamma(\alpha_j+\alpha_4+\beta_{k}-\rho_1-\rho_2-\rho_3)
\prod_{k\neq j,4}^4\Gamma(1+\rho_1+\rho_2+\rho_3-\alpha_k-\beta_i-\alpha_4)}}
\quad (i=1,2;j=1,2,3).
\end{split}
\end{equation}

\begin{equation}
\small
\begin{split}
(C_{21})_{i4}
&=
\ds{\prod_{k\neq 1}^3(\alpha_1+\alpha_k+\beta_j-\rho_1-\rho_2-\rho_3)
e(\tfrac{1}{2}(\alpha_1+\alpha_4+\beta_i-2\rho_3))
\frac{(t_2-t_1)^{\alpha_4-\beta_i}}{(t_1-t_2)^{\alpha_4-\alpha_j}}
\frac{\Gamma(-\beta_{i})\Gamma(\alpha_4+1)}{\Gamma(\alpha_4-\rho_1)
\Gamma(\alpha_4-\rho_2)\Gamma(\alpha_4-\rho_3)}}\\
&
\ds{\frac{\prod_{k\neq j}^4\Gamma(\alpha_j-\alpha_{k})
\prod_{k\neq i}^2\Gamma(1+\beta_{k}-\beta_i)}
{\prod_{k\neq i}^2\Gamma(\alpha_1+\alpha_4+\beta_{k}-\rho_1-\rho_2-\rho_3)
\prod_{k\neq 1,4}^4\Gamma(1+\rho_1+\rho_2+\rho_3-\alpha_k-\beta_i-\alpha_1)}}
\quad (i=1,2).
\end{split}
\end{equation}
\end{Theorem}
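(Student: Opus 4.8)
The plan is to produce the connection coefficients of $({\rm IV})_6$ by propagating those of $({\rm III})_5$ — already determined in \cite{SK} — through the Katz operation
\[
({\rm IV})_6={\rm add}_{(\rho_3,0)}\circ{\rm mc}_{-c-\rho_3}\circ{\rm add}_{(c,0)}({\rm III})_5,
\]
which acts at the index $k=1$ with $\rho=\rho_3$. Under this operation the first block of $({\rm III})_5$, of size $3$ and carrying the exponents $\alpha_1,\alpha_2,\alpha_3$, is enlarged to size $4$ and splits into a $(11)$-part of size $3$ and a $(12)$-part of size $1$, the adjoined exponent being $\alpha_4=\rho_3$. Accordingly, the rows $(C_{12})_{ij}$ with $i\le 3$ and the columns $(C_{21})_{ij}$ with $j\le 3$ are the coefficients $C^{mc}_{(11)2}$ and $C^{mc}_{2(11)}$, while the fourth row $(C_{12})_{4j}$ and the fourth column $(C_{21})_{i4}$ are the $(12)$-coefficients $C^{mc}_{(12)2}$ and $C^{mc}_{2(12)}$.

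First I would dispose of the $(11)$-part. Since $k=1<2$, the coefficients $C^{mc}_{(11)2}$ and $C^{mc}_{2(11)}$ are given directly by the recurrence formulas \eqref{C(k1)j} and \eqref{Ci(k1)} of Theorem \ref{conn:mcadd}. Substituting the $({\rm III})_5$ connection coefficients, inserting the renaming of exponents dictated by the construction, and simplifying the products of $\Gamma$-factors — during which the gauge parameter $c$ must cancel — reproduces the stated formulas for $(C_{12})_{ij}$ $(i=1,2,3)$ and $(C_{21})_{ij}$ $(j=1,2,3)$, together with their power factors $(t_2-t_1)^{\rho_3-\alpha_i}/(t_1-t_2)^{\rho_3-\beta_j}$ and exponential phases. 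This portion is routine bookkeeping, entirely parallel to \cite{SK}.

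The genuinely new step is the determination of $(C_{12})_{4j}$ and $(C_{21})_{i4}$, which Theorem \ref{conn:mcadd} does not reach. Here I would invoke the symmetry of $({\rm IV})_6$ in the four exponents $\alpha_1,\dots,\alpha_4$ of $A_{11}$. The formulas established in the previous step hold for every choice of the exponents, hence in particular for the copy of the system in which $\alpha_4$ and some $\alpha_{i_0}\in\{\alpha_1,\alpha_2,\alpha_3\}$ are interchanged; in that copy $\alpha_4$ occupies a $(11)$-slot and is therefore covered by \eqref{C(k1)j} and \eqref{Ci(k1)}. Since the two copies differ only by a permutation of the first four coordinates together with the diagonal gauge that restores the canonical form \eqref{eq:IV}, the coefficient $(C_{12})_{4j}$ of the original system is recovered from the (known) $i_0$-th row of the permuted copy after multiplying by this gauge factor. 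Taking $i_0=1$, the gauge is exactly what produces the extra rational prefactor $\bigl(\prod_{k\neq 1}^3(\alpha_1+\alpha_k+\beta_j-\rho_1-\rho_2-\rho_3)\bigr)^{\pm1}$ distinguishing the fourth row and column from the generic entries; the same argument on the column side yields $(C_{21})_{i4}$.

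The main obstacle, I expect, lies in making this symmetry precise and pinning down the normalization: the canonical form \eqref{eq:IV} treats its fourth row and column asymmetrically (the fourth row of $A_{12}$ lacks the $\alpha$-product numerator carried by the others), so one must identify explicitly the diagonal conjugation relating the two presentations and verify that the value obtained for $(C_{12})_{4j}$ is independent of the auxiliary choice $\alpha_{i_0}$. I would close with a consistency check: substituting the complete matrix of connection coefficients into \eqref{eq:Mk} must reproduce the monodromy representation of type $({\rm IV})_6$ prescribed in \cite{YHMon}, thereby confirming the phases, the power factors, and the overall normalization at once.
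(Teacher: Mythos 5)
Your proposal follows essentially the same route as the paper: the Katz operation from $({\rm III})_5$ with Theorem \ref{conn:mcadd} supplying the $(11)$-part entries (the paper computes only $(C_{12}^{(6)})_{11}$ this way and fills in the rest of rows $1$--$3$ via the genuine symmetries $\sigma_{1i}^{\alpha}\sigma_{1j}^{\beta}$ and Lemma \ref{lem:sym}, a cosmetic difference), and the fourth row and column obtained exactly as you describe, from the quasi-symmetry $A={\rm Ad}(D_{14}^{\alpha})\sigma_{14}^{\alpha}(A)$ whose explicit diagonal gauge $D_{14}^{\alpha}={\rm diag}(d_1,\ldots,d_6){\rm diag}(S_{14},I_2)$ produces precisely the rational prefactors $\prod_{k\neq 1}^3(\alpha_1+\alpha_k+\beta_j-\rho_1-\rho_2-\rho_3)^{\pm 1}$ you anticipated. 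You correctly identified both the gap in Theorem \ref{conn:mcadd} (it does not reach the $(k2)$-coefficients) and the asymmetry of the canonical form \eqref{eq:IV} in $\alpha_4$ as the crux, which is exactly what the paper's proof addresses.
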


\begin{Theorem}
For the Okubo systems \eqref{eq:IV*} of types  $({\rm IV}^*)_6$, the connection coefficients of the canonical solution matrices are expressed as follows.
\begin{equation}
\begin{split}
(C_{12})_{ij}&=
e(\tfrac{1}{2}(2\rho_1+\rho_2-\alpha_i-\beta_j-\gamma_1-\gamma_2))
\frac{(t_1-t_2)^{\rho_1+\rho_2-\alpha_i-\gamma_1}}
{(t_2-t_1)^{\rho_1+\rho_2-\beta_j-\gamma_1}}
\left(\frac{t_1-t_3}{t_2-t_3}\right)^{\rho_1+\rho_2-\alpha_i-\beta_j}\\
&\frac{\Gamma(-\alpha_i)\Gamma(\beta_j+1)}
{\Gamma(1+\rho_1-\alpha_i)\Gamma(\beta_j-\rho_1)}
\frac{\prod_{k\neq j}^2\Gamma(\beta_j-\beta_{k})
\prod_{k\neq i}^2\Gamma(1+\alpha_k-\alpha_i)}
{\prod_{k\neq i}^2\Gamma(\alpha_k+\beta_j+\gamma_2-2\rho_1-\rho_2)
\prod_{k\neq j}^2\Gamma(1+2\rho_1+\rho_2-\alpha_i-\beta_{k}-\gamma_2)}
\end{split}
\end{equation}

\begin{equation}
\begin{split}
(C_{13})_{ij}
&=(\prod_{k\neq i}^2h_{k2j}\prod_{k\neq j}^2h_{i2k})^{\delta_{2j}}e(\tfrac{1}{2}(2\rho_1+\rho_2-\alpha_i-\beta_1-\gamma_1-\gamma_2)
\frac{(t_1-t_3)^{\rho_1+\rho_2-\alpha_i-\beta_1}}
{(t_3-t_1)^{\rho_1+\rho_2-\beta_1-\gamma_j}}
\left(\frac{t_1-t_2}{t_3-t_2}\right)^{\rho_1+\rho_2-\alpha_i-\gamma_j}\\
&
\frac{\Gamma(\gamma_j+1)\Gamma(-\alpha_i)}{\Gamma(1+\rho_1-\alpha_i)\Gamma(\gamma_j-\rho_1)}
\frac{\prod_{k\neq j}^2\Gamma(\gamma_j-\gamma_k)
\prod_{k\neq i}^2\Gamma(1+\alpha_k-\alpha_i)}{
\prod_{k\neq i}^2\Gamma(1+\alpha_k+\beta_2+\gamma_j-2\rho_1-\rho_2)
\prod_{k\neq j}^2\Gamma(1+2\rho_1+\rho_2-\alpha_i-\beta_2-\gamma_k)}
\end{split}
\end{equation}

\begin{equation}
\begin{split}
(C_{23})_{ij}
&=(\prod_{k\neq i}^2h_{2kj}\prod_{k\neq j}^2h_{2ik})^{\delta_{2j}}
e(\tfrac{1}{2}(\rho_1+\rho_2-\alpha_1-\beta_i-\gamma_j))
\frac{(t_2-t_3)^{\rho_1+\rho_2-\alpha_1-\beta_i}}
{(t_3-t_2)^{\rho_1+\rho_2-\alpha_1-\gamma_j}}
\left(\frac{t_2-t_1}{t_3-t_1}\right)^{\rho_1+\rho_2-\beta_i-\gamma_j}
\\
&
\frac{\Gamma(\gamma_j+1)\Gamma(-\beta_i)}
{\Gamma(1+\rho_1-\beta_i)\Gamma(\gamma_j-\rho_1)}
\frac{\prod_{k\neq j}^2\Gamma(\gamma_j-\gamma_k)
\prod_{k\neq i}^2\Gamma(1+\beta_k-\beta_i)}
{\prod_{k\neq i}^2\Gamma(1+\alpha_2+\beta_k+\gamma_j-2\rho_1-\rho_2)
\prod_{k\neq j}^2\Gamma(1+2\rho_1+\rho_2-\alpha_2-\beta_i-\gamma_k)}\\
\end{split}
\end{equation}

\begin{equation}
\begin{split}
(C_{21})_{ij}&=
e(\tfrac{1}{2}(2\rho_1+\rho_2-\alpha_i-\beta_j-\gamma_1-\gamma_2))
\frac{(t_1-t_2)^{\rho_1+\rho_2-\alpha_j-\gamma_1}}
{(t_2-t_1)^{\rho_1+\rho_2-\beta_i-\gamma_1}}
\left(\frac{t_1-t_3}{t_2-t_3}\right)^{\rho_1+\rho_2-\alpha_i-\beta_j}\\
&\frac{\Gamma(\alpha_j+1)\Gamma(-\beta_i)}
{\Gamma(1+\rho_1-\beta_i)\Gamma(\alpha_j-\rho_1)}
\frac{\prod_{k\neq j}^2\Gamma(\alpha_j-\alpha_k)
\prod_{k\neq i}^2\Gamma(1+\beta_{k}-\beta_i)}
{\prod_{k\neq i}^2\Gamma(\alpha_j+\beta_k+\gamma_2-2\rho_1-\rho_2)
\prod_{k\neq j}\Gamma(1+2\rho_1+\rho_2-\alpha_k-\beta_{i}-\gamma_2)}
\end{split}
\end{equation}

\begin{equation}
\begin{split}
(C_{31})_{ij}&=\frac{e(\tfrac{1}{2}(2\rho_1+\rho_2-\alpha_j-\beta_1-\gamma_i-\gamma_2))}{(\prod_{k\neq i}^2h_{j2k}\prod_{k\neq j}^2h_{k2i})^{\delta_{i2}}}
\frac{(t_1-t_3)^{\rho_1+\rho_2-\alpha_j-\beta_1}}
{(t_3-t_1)^{\beta_1+\gamma_i-\rho_1-\rho_2}}
\left(\frac{t_1-t_2}{t_3-t_2}\right)^{\rho_1+\rho_2-\alpha_j-\gamma_i}\\
&\frac{\Gamma(-\gamma_i)\Gamma(1+\alpha_j)}
{\Gamma(1+\rho_1-\gamma_i)\Gamma(\alpha_j-\rho_1)}
\frac{\prod_{k\neq j}\Gamma(\alpha_j-\alpha_k)
\prod_{k\neq i}\Gamma(1+\gamma_i-\gamma_k)}
{\prod_{k\neq i}\Gamma(\alpha_j+\beta_2+\gamma_k-2\rho_1-\rho_2)
\prod_{k\neq j}^2\Gamma(2\rho_1+\rho_2-\alpha_k-\beta_{2}-\gamma_i)
}
\end{split}
\end{equation}

\begin{equation}
\begin{split}
(C_{32})_{ij}&=\frac{e(\tfrac{1}{2}(2\rho_1+\rho_2-\alpha_1-\beta_j-\gamma_1-\gamma_2))}{(\prod_{k\neq i}^2h_{2jk}\prod_{k\neq i}^2h_{2ki})^{\delta_{i2}}}
\frac{(t_2-t_3)^{\rho_1+\rho_2-\alpha_1-\beta_j}}
{(t_3-t_2)^{\alpha_1+\gamma_i-\rho_1-\rho_2}}
\left(\frac{t_2-t_1}{t_3-t_1}\right)^{\rho_1+\rho_2-\beta_j-\gamma_i}\\
&\frac{\Gamma(-\gamma_i)\Gamma(1+\beta_j)}
{\Gamma(1+\rho_1-\gamma_i)\Gamma(\beta_j-\rho_1)}
\frac{\prod_{k\neq j}\Gamma(\beta_j-\beta_{k})
\prod_{k\neq i}\Gamma(1+\gamma_i-\gamma_k)}
{\prod_{k\neq i}\Gamma(\alpha_2+\beta_j+\gamma_k-2\rho_1-\rho_2)
\prod_{k\neq j}\Gamma(\rho_1+\rho_2-\alpha_2-\beta_k-\gamma_i)}\\
\end{split}
\end{equation}
\end{Theorem}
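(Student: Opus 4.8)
The strategy mirrors the ${\rm II}^*/{\rm III}^*$ argument: I would build the connection coefficients of $({\rm IV}^*)_6$ from those of $({\rm III}^*)_5$ by means of the Katz-operation construction of Section 3,
\[
({\rm IV}^*)_6={\rm add}_{(0,0,\rho_1)}\circ{\rm mc}_{-c-\rho_1}\circ{\rm add}_{(0,0,c)}({\rm III}^*)_5 ,
\]
in which the operation acts on the third block ($k=3$), enlarging it from size $1$ to size $2$. The input is the $n=2$ specialization of the $({\rm III}^*)_{2n+1}$ formulas established above, and the engine is the recurrence of Theorem \ref{conn:mcadd}. Since $k=3$ is the last block, every relevant pair obeys $i,j<k$, so the branch choices in \eqref{Ci(k1)}--\eqref{C(k1)j} are fixed once and for all.

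The recurrence supplies immediately the coefficients carrying no $h$-product correction: $C_{12}$ and $C_{21}$ come from \eqref{cij}, the first ($k1$) columns $(C_{13})_{i1}$, $(C_{23})_{i1}$ come from \eqref{Ci(k1)}, and the first ($k1$) rows $(C_{31})_{1j}$, $(C_{32})_{1j}$ come from \eqref{C(k1)j}. These are exactly the cases $\delta_{2j}=0$ resp.\ $\delta_{i2}=0$, which confirms that the index value $1$ labels the $\Psi^{mc}_{k1}$ component. This part is routine: one substitutes $A_{kk}=\gamma$, $\rho=\rho_1$ and the relevant $c$, inserts the explicit $({\rm III}^*)_5$ coefficients, and reorganizes the $\Gamma$-quotients together with the $(t_i-t_k)^{\rho+c}$ prefactors. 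The only care needed is with the half-integer branch factors $e(\pm\tfrac12(\rho+c))$ and the overall signs, which must be tracked to recover the exponentials and the $\pm1$ prefactors in the statement.

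The substantive content is the determination of the second ($k2$) components $(C_{13})_{i2}$, $(C_{23})_{i2}$, $(C_{31})_{2j}$, $(C_{32})_{2j}$, which Theorem \ref{conn:mcadd} deliberately leaves open (its closing remark names precisely $C_{i(k2)}$ and $C_{(k2)j}$). For these I would invoke the symmetry of \eqref{eq:IV*} under the interchange $\gamma_1\leftrightarrow\gamma_2$ of the two exponents of the third block. A direct inspection shows that the main factor of the stated $(C_{13})_{i2}$ is obtained from $(C_{13})_{i1}$ by this very substitution, and likewise for the other three; so the task is to show that the interchange is an explicit symmetry of the canonical system and to compute the diagonal gauge it requires. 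The point is that $\gamma_1\leftrightarrow\gamma_2$ does \emph{not} preserve \eqref{eq:IV*} literally --- its off-diagonal entries are assembled from the couplings $h_{ij2}$ and the single entries $\pm1$, which treat the two components of the $\gamma$-block asymmetrically --- but restores it after conjugation by a suitable diagonal matrix. Transporting the known $k1$ coefficient through the interchange and compensating by that diagonal rescaling yields the $k2$ coefficient, and the rescaling ratio is exactly the product of the $h_{ijk}$'s appearing with exponent $\delta_{2j}$ (in the numerator for $C_{13},C_{23}$) or $\delta_{i2}$ (in the denominator for $C_{31},C_{32}$).

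I expect this symmetry step to be the principal obstacle. One must exhibit the precise diagonal conjugating matrix that implements $\gamma_1\leftrightarrow\gamma_2$ on \eqref{eq:IV*} --- equivalently, pin down how the permutation of the last two solution columns must be re-normalized to recover the canonical behavior at $t_3$ --- and then verify term by term that the resulting factor matches the $h$-products in the statement. In particular one should explain why the surviving index of those products is the $\gamma$-index that is carried along by the symmetry, while the frozen middle index ($\beta_2$ in $C_{13},C_{31}$) or first index ($\alpha_2$ in $C_{23},C_{32}$) is inherited from the distinguished exponent of the block absent from the pair $(i,j)$; this is the place where the asymmetric structure of the canonical form genuinely enters. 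Consistency of the six formulas under the residual permutation of $t_1,t_2,t_3$ and their degeneration to the $({\rm III}^*)_5$ data when the $k2$ block is suppressed provide independent checks rather than separate steps of the proof.
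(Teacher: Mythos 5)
Your proposal follows essentially the same route as the paper: the coefficients accessible through Theorem~\ref{conn:mcadd} are taken from the $({\rm III}^*)_5$ data, and the remaining $(k2)$-components $(C_{13})_{i2}$, $(C_{23})_{i2}$, $(C_{31})_{2j}$, $(C_{32})_{2j}$ are obtained by implementing $\gamma_1\leftrightarrow\gamma_2$ as a diagonal-gauge symmetry via Lemma~\ref{lem:sym} --- exactly what the paper does, exhibiting the conjugating matrix you postulate as $D^{\gamma}={\rm diag}(h_{111}h_{121},\,h_{211}h_{221},\,-h_{111}h_{211},\,-h_{121}h_{221},\,h_{111}h_{121}h_{211}h_{221},\,1)\,{\rm diag}(I_2,I_2,S_{12})$, whose $h$-product entries produce precisely the $\delta_{2j}$- and $\delta_{i2}$-corrections you predicted. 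The only cosmetic difference is that the paper computes just the $(1,1)$ entries from the recurrence and propagates to general $(i,j)$ via the additional symmetries $\sigma_{12}^{\alpha}$, $\sigma_{12}^{\beta}$ (again through Lemma~\ref{lem:sym}), whereas you read off the full matrices $C_{12}$, $C_{21}$ and the $(k1)$-columns/rows directly from the matrix form of the recurrence, which amounts to the same computation.
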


\subsection{Proof of the main theorems}

Before the proof of Theorems 4.1, 4.2 and 4.3, we give a simple remark concerning the symmetry of Okubo systems.
\begin{Lemma}\label{lem:sym}
Assume that the Okubo system \eqref{eq:Okubo}  is obtained by conjugation from an Okubo system
\begin{equation}\label{eq:OkuboB}
(x-T)\frac{d}{dx}Y=BY
\end{equation} as
\begin{equation}
A={\rm Ad}(D)B=DBD^{-1},\quad D={\rm diag}(D_1,\ldots,D_r),\quad D_i\in {\rm GL}(n_i,\mathbb{C})
\quad (i=1,\ldots,r).
\end{equation}
Let $C_{ij}^A$ and $C_{ij}^B$ $(1\leq i,j \leq r)$ be the connection matrices for the canonical solution matrices of \eqref{eq:Okubo} and \eqref{eq:OkuboB}, respectively.
Then we have 
%For canonical solution matrix of \eqref{eq:Okubo} and  $(x-T)\frac{d}{dx}Y=BY$,
%we take connection matrices $C_{ij}^{A}$ and $C_{ij}^{B}$. Then 
%the relations
\begin{equation}
C_{ij}^{A}=D_iC_{ij}^{B}D_j^{-1}\quad (i,j=1,\ldots,r).
\end{equation}
\end{Lemma}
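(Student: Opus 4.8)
The plan is to lift the conjugation relation $A = DBD^{-1}$ to the level of canonical solution matrices and then transport it through the local decomposition that defines the connection coefficients. Throughout, write $\Psi_j^A$, $\Psi_j^B$ and $H_{ij}^A$, $H_{ij}^B$ for the canonical solution matrices and holomorphic remainders attached to \eqref{eq:Okubo} and \eqref{eq:OkuboB}, respectively.

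First I would note that, since $D = \mathrm{diag}(D_1,\ldots,D_r)$ commutes with $T = \mathrm{diag}(t_1 I_{n_1},\ldots,t_r I_{n_r})$, the factor $xI_n - T$ is unchanged by conjugation by $D$; hence $Y \mapsto DY$ carries solutions of \eqref{eq:OkuboB} to solutions of \eqref{eq:Okubo}. I would then establish the key relation
\begin{equation}
\Psi_j^A(x) = D\,\Psi_j^B(x)\,D_j^{-1}\qquad (j=1,\ldots,r).
\end{equation}
Starting from $\Psi_j^B(x) = F_j^B(x)(x-t_j)^{B_{jj}}$ and using $A_{jj} = D_j B_{jj} D_j^{-1}$ together with $(x-t_j)^{A_{jj}} = D_j (x-t_j)^{B_{jj}} D_j^{-1}$, I would rewrite
\begin{equation}
D\,\Psi_j^B(x)\,D_j^{-1} = \bigl(D F_j^B(x) D_j^{-1}\bigr)(x-t_j)^{A_{jj}}.
\end{equation}
The prefactor is holomorphic at $t_j$, and evaluating at $x = t_j$ yields $D(0,I_{n_j},0)^t D_j^{-1} = (0,I_{n_j},0)^t$, since the block-diagonal $D$ acts on the single nonzero (the $j$-th) row block by $D_j$, which is then cancelled by $D_j^{-1}$. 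Uniqueness of the canonical solution matrix then gives the relation.

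With this in hand, I would substitute into the local expansion $\Psi_j^B(x) = \Psi_i^B(x)C_{ij}^B + H_{ij}^B(x)$ near $x = t_i$; multiplying on the left by $D$, on the right by $D_j^{-1}$, and using $D\Psi_i^B(x) = \Psi_i^A(x) D_i$, I obtain
\begin{equation}
\Psi_j^A(x) = \Psi_i^A(x)\bigl(D_i C_{ij}^B D_j^{-1}\bigr) + D H_{ij}^B(x) D_j^{-1}.
\end{equation}
Because $D H_{ij}^B(x) D_j^{-1}$ is again a solution matrix of \eqref{eq:Okubo} holomorphic around $t_i$, uniqueness of the decomposition defining $C_{ij}^A$ forces $C_{ij}^A = D_i C_{ij}^B D_j^{-1}$. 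The argument is essentially bookkeeping of how conjugation interacts with local normalizations, so I expect no serious obstacle; the only delicate point is the normalization step, where the block-diagonal form of $D$---matching the block type of $T$---is exactly what guarantees that $D(0,I_{n_j},0)^t D_j^{-1}$ collapses back to $(0,I_{n_j},0)^t$. Were $D$ not to respect the block structure, the canonical normalization would not be preserved and the clean transformation law would fail.
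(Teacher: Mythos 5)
Your proof is correct, and it is exactly the argument the paper has in mind: the paper states this lemma as a ``simple remark'' and omits the proof entirely, so your write-up supplies the intended reasoning rather than deviating from it. You correctly identify the two points on which everything hinges --- that the block-diagonal $D$ commutes with $xI_n-T$ so that $Y\mapsto DY$ maps solutions to solutions, and that $D\,F_j^B(t_j)\,D_j^{-1}=(0,I_{n_j},0)^t$ so that $D\,\Psi_j^B(x)\,D_j^{-1}$ inherits the canonical normalization and equals $\Psi_j^A(x)$ by uniqueness --- after which the transformation law $C_{ij}^A=D_iC_{ij}^BD_j^{-1}$ follows from uniqueness of the decomposition $\Psi_j=\Psi_iC_{ij}+H_{ij}$, just as you argue.
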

In the context of the Okubo system in Yokoyama's list, we apply this lemma to analyze how the connection coefficients transform under the permutation of  characteristic exponents.
% especially for the Okubo system of 
% In this case,  we use the map $\sigma_{ij}^{\alpha}$ from
%$\bs{\alpha}=(\alpha_1,\ldots,\alpha_n)$ to $\bs{\alpha}$ that changes
%$\alpha_i$ for $\alpha_j$ and acts invariant for the others.
%And we can regard the map as the automorphism for $\mathbb{C}(\bs{\alpha})$.

\par\medskip
\noindent
Case ${\rm II}^*,{\rm III}^*$: 
Recall that the canonical form of the system $({\rm III}^*)_3$ is constructed by the middle convolution ${\rm mc}_{\mu}$ for the 
differential system 
\begin{equation}
\frac{d}{dx}Y=\left(\frac{\alpha}{x-t_1}+\frac{\beta}{x-t_2}+\frac{\gamma}{x-t_3}\right)Y,
\quad Y=(x-t_1)^{\alpha}(x-t_2)^{\beta}(x-t_3)^{\gamma}.
\end{equation}
Since the Okubo system of type $({\rm III}^*)_3$ is equivalent to the Okubo system
of type $({\rm I}^*)_3$ up to conjugation, the connection coefficients in this case are directly 
computed as in \cite{SK}. 
%Therefore the connection coefficients for canonical solution matrix of type $({\rm III}^*)_3$ are given. 
The connection coefficients
$(C_{12}^{(3)})_{11}$ and $(C_{13}^{(3)})_1$ are in fact given as
\begin{equation}
\begin{split}
(C_{12}^{(3)})_{11}
=&-
e(\tfrac{1}{2}(\rho_1+\rho_2-\alpha_1-\beta_1-\gamma))
\frac{(t_1-t_2)^{\rho_1+\rho_2-\alpha_1-\gamma}}
{(t_2-t_1)^{\beta_1+\gamma-\rho_1-\rho_2}}\\
&\left(\frac{t_1-t_3}
{t_2-t_3}\right)^{\rho_1+\rho_2-\alpha_1-\beta_1}
\frac{\Gamma(-\alpha_1)\Gamma(\beta_1+1)}{\Gamma(1+\rho_1-\alpha_1)\Gamma(\beta_1-\rho_1)}
\end{split}
\end{equation}

\begin{equation}
\begin{split}
(C_{13}^{(3)})_{1}
=&-(\gamma-\rho_1)^{-1}
e(\tfrac{1}{2}(\rho_1+\rho_2-\alpha_1-\beta_1-\gamma))
\frac{(t_1-t_3)^{\rho_1+\rho_2-\alpha_1-\beta_1}}
{(t_3-t_1)^{\beta_1+\gamma-\rho_1-\rho_2}}\\
&\left(\frac{t_1-t_2}{t_3-t_2}\right)^{\rho_1+\rho_2-\alpha_1-\gamma}
\frac{\Gamma(-\alpha_i)\Gamma(\gamma+1)}{\Gamma(1+\rho_1-\alpha_1)\Gamma(\gamma-\rho_1)}
\end{split}
\end{equation}

Using Theorem \ref{conn:mcadd}, the connection coefficients $(C_{12}^{(2n+1)})_{11}$ and 
$(C_{13}^{(2n+1)})_{1}$ of $({\rm III}^*)_{2n+1}$ are computed as follows:
\begin{equation}
\small
\begin{split}
(C_{12}^{(2n+1)})_{11}&=(-1)^{n-1}
\prod_{k=2}^{n}e(\tfrac{1}{2}(\beta_k^{(2k+1)}-\rho_1^{(2k+1)}))
(t_1-t_2)^{\beta_{k}^{(2k+1)}-\rho_1^{(2k+1)}}
e(\tfrac{1}{2}(\alpha_k^{(2k)}-\rho_1^{(2k)}))(t_2-t_1)^{\rho_1^{(2k)}-\alpha_k^{(2k)}}\\
&\prod_{k=2}^{n}
\frac{\Gamma(-\alpha_1^{(2k+1)})}{\Gamma(-\alpha_1^{(2k)})}
\frac{\Gamma(\beta_1^{(2k+1)}-\beta_k^{(2k+1)})}{\Gamma(\beta_1^{(2k+1)}-\rho_1^{(2k+1)})}
\frac{\Gamma(1+\alpha_k^{(2k)}-\alpha_1^{(2k)})}{\Gamma(1+\rho_1^{(2k)}-\alpha_1^{(2k)})}
\frac{\Gamma(\beta_1^{(2k)}+1)}{\Gamma(\beta_1^{(2k-1)}+1)}(C_{12}^{(3)})_{11}\\
&=(-1)^{n-1}e(\tfrac{1}{2}(-\rho_1^{(2n+1)}+\beta_2^{(5)}))e(\tfrac{1}{2}(-\rho_1^{(2n)}+\alpha_2^{(4)}))
(t_1-t_2)^{-\rho_1^{(2n+1)}+\beta_2^{(5)}}(t_2-t_1)^{\rho_1^{(2n)}-\alpha_2^{(4)}}
\\
&\frac{\Gamma(-\alpha_1^{(2n+1)})\Gamma(\beta_1^{(2n)}+1)}{\Gamma(-\alpha_1^{(3)})\Gamma(\beta_1^{(3)}+1)}
\prod_{k=2}^{n}\frac{\Gamma(\beta_1^{(2k+1)}-\beta_k^{(2k+1)})}{\Gamma(\beta_1^{(2k+1)}-\rho_1^{(2k+1)})}
\frac{\Gamma(1+\alpha_k^{(2k)}-\alpha_1^{(2k)})}{\Gamma(1+\rho_1^{(2k)}-\alpha_1^{(2k)})}(C_{12}^{(3)})_{11}\\
&=(-1)^{n}
e(\tfrac{1}{2}(-\rho_1^{(2n+1)}-\rho_1^{(2n)}+\beta_2^{(5)}+\alpha_2^{(4)}-\rho_1^{(3)}))
\frac{(t_1-t_2)^{-\rho_1^{(2n+1)}+\beta_1^{(3)}}}
{(t_2-t_1)^{\rho_1^{(2n)}-\alpha_2^{(4)}+\rho_1^{(3)}-\alpha_1^{(3)}}}
\\
&\left(\frac{t_1-t_3}
{t_2-t_3}\right)^{\rho_1^{(2n+1)}+\rho_2^{(2n+1)}-\alpha_1^{(2n+1)}-\beta_1^{(2n+1)}}
\frac{\Gamma(-\alpha_1^{(2n+1)})\Gamma(\beta_1^{(2n+1)}+1)}{\Gamma(1+\rho_1^{(2n+1)}-\alpha_1^{(2n+1)})\Gamma(\beta_1^{(2n+1)}-\rho_1^{(2n+1)})}\\
&\prod_{k=2}^{n}\frac{\Gamma(\beta_1^{(2n+1)}-\beta_k^{(2n+1)})}{\Gamma(\beta_1^{(2n+1)}+\alpha_{k}^{(2n+1)}-\rho_1^{(2n+1)}-\rho_2^{(2n+1)})}
\frac{\Gamma(1+\alpha_k^{(2n+1)}-\alpha_1^{(2n+1)})}					{\Gamma(1+\rho_1^{(2n+1)}+\rho_2^{(2n+1)}-\alpha_1^{(2n+1)}-\beta_k^{(2n+1)})}\\
&=(-1)^{n}
e(\tfrac{1}{2}(\rho_1+\rho_2-\alpha_1-\beta_1-\gamma)
\frac{(t_1-t_2)^{\rho_1+\rho_2-\alpha_1-\gamma}}
{(t_2-t_1)^{\beta_1+\gamma-\rho_1-\rho_2}}
\left(\frac{t_1-t_3}
{t_2-t_3}\right)^{\rho_1+\rho_2-\alpha_1-\beta_1}\\
&
\frac{\Gamma(-\alpha_1)\Gamma(\beta_1+1)}{\Gamma(1+\rho_1-\alpha_1)\Gamma(\beta_1-\rho_1)}
\prod_{k=2}^{n}\frac{\Gamma(\beta_1-\beta_k)}{\Gamma(\beta_1+\alpha_{k}-\rho_1-\rho_2)}
\frac{\Gamma(1+\alpha_k-\alpha_1)}{\Gamma(1+\rho_1+\rho_2-\alpha_1-\beta_k)}
\end{split}
\end{equation}

\begin{equation}
\small
\begin{split}
(C_{13}^{(2n+1)})_{1}&=(-1)^{n-1}\prod_{k=2}^n
\left(\frac{t_1-t_2}{t_3-t_2}\right)^{\beta_k^{(2k+1)}-\rho_1^{(2k+1)}}
\frac{e(\frac{1}{2}(\beta_k^{(2k+1)}-\rho_1^{(2k+1)}))\Gamma(-\alpha_1^{(2k+1)})}{\Gamma(-\alpha_1^{(2k)})}
\frac{\Gamma(\gamma^{(2k+1)}+1)}{\Gamma(\gamma^{(2k)}+1)}\\
&
e(\tfrac{1}{2}(\alpha_k^{(2k)}-\rho_1^{(2k)}))(t_3-t_1)^{\rho_1^{(2k)}-\alpha_k^{(2k)}}
\frac{\Gamma(1+\alpha_k^{(2k)}-\alpha_1^{(2k)})}{\Gamma(1+\rho_1^{(2k)}-\alpha_1^{(2k)})}
\frac{\Gamma(\gamma^{(2k)}+1)}{\Gamma(\gamma^{(2k-1)}+1)}(C_{13}^{(3)})_{11}\\
&=(-1)^{n-1}e(\frac{1}{2}(-\rho_1^{(2n+1)}-\rho_2^{(2n+1)}+\beta_2^{(5)}+\alpha_2^{(2)}))
(t_3-t_1)^{\rho_2^{(2n+1)}-\alpha_2^{(4)}}
\left(\frac{t_1-t_2}{t_3-t_2}\right)^{-\rho_1^{(2n+1)}+\beta_2^{(5)}}\\
&\frac{\Gamma(\gamma^{(2n+1)}+1)}{\Gamma(\gamma^{(3)}+1)}\frac{\Gamma(-\alpha_1^{(2n+1)})}{\Gamma(-\alpha_1^{(4)})}
\prod_{k=2}^n\frac{\Gamma(1+\alpha_k^{(2n+1)}-\alpha_1^{(2n+1)})}{\Gamma(1+\rho_1^{(2n+1)}+\rho_2^{(2n+1)}-\alpha_1^{(2n+1)}-\beta_k^{(2n+1)})}(C_{13}^{(3)})_{11}\\
&=(-1)^{n}e(\tfrac{1}{2}(\rho_1+\rho_2-\alpha_1-\beta_1-\gamma))
(t_1-t_3)^{\rho_1+\rho_2-\alpha_1-\beta_1}(t_3-t_1)^{\beta_1+\gamma-\rho_1-\rho_2}\\
&\left(\frac{t_1-t_2}{t_3-t_2}\right)^{\rho_1+\rho_2-\alpha_1-\gamma}
%(\rho_1+\rho_2-\alpha_i-\beta_1)^{-1}
\frac{\Gamma(\gamma+1)\Gamma(-\alpha_1)}{\Gamma(1+\rho_1-\alpha_1)}
\frac{\prod_{k=2}^n\Gamma(1+\alpha_k-\alpha_1)}{\prod_{k=1}^n\Gamma(1+\rho_1+\rho_2-\alpha_1-\beta_k)}
\end{split}
\end{equation}
The connection coefficients $(C_{12})_{ij}$ and $(C_{13})_i$ for other $i,j$ are derived by Lemma {\ref{lem:sym}. In the following we denote by $\sigma_{ij}^{\alpha}$ (resp. $\sigma_{ij}^\beta$) 
 the operation
which exchanges the parameters $\alpha_i$ and $\alpha_j$ (resp. $\beta_i$ and $\beta_j$).  
Then the matrix $A$ of the canonical form of type $({\rm III}^*)_{2n+1}$ satisfies
\begin{equation}\label{Asym}
A={\rm Ad}(D)\sigma_{1i}^{\alpha}\sigma_{1j}^{\beta}(A),\quad 
D={\rm diag}(S_{1i},S_{1j},1)
\end{equation}
where  $S_{ij}$ is the permutation matrix corresponding the transposition $(ij)$ of matrix indices. From \eqref{Asym},  applying Lemma \ref{lem:sym} to $B=\sigma_{1i}^{\alpha}\sigma_{1j}^{\beta}(A)$ we obtain
\begin{equation}
C_{12}=S_{1i}\sigma_{1i}^{\alpha}\sigma_{1j}^{\beta}(C_{12})S_{1j},\quad 
C_{13}=S_{1i}\sigma_{1i}^{\alpha}\sigma_{1j}^{\beta}(C_{13}).
\end{equation}
Therefore the 
connection coefficients $(C_{12})_{ij}$ and $(C_{13})_i$ are given by 
\begin{equation}
(C_{12})_{ij}=\sigma_{1i}^{\alpha}\sigma_{1j}^{\beta}(C_{12})_{11},
\quad 
(C_{13})_i=\sigma_{1i}^{\alpha}(C_{13})_1.
\end{equation}
The other connection coefficients of type ${\rm III}^*$ and connection coefficients of type ${\rm II}^*$ are computed similarly.\\

\par\medskip
\noindent {\bf Case} ${\rm IV}$: 
The canonical forms of type ${\rm IV}$ is given by the operation
\begin{equation}
({\rm IV})_6={\rm add}_{(\rho_3,0)}\circ {\rm mc}_{-\rho_3-c}\circ {\rm add}_{(c,0)}({\rm III}_5).
\end{equation}
Using Theorem \ref{conn:mcadd}, we can completely determine $(C_{12}^{(6)})_{11}$ 
for \eqref{eq:IV} by the following computation:
\begin{equation}
\small
\begin{split}
(C_{12}^{(6)})_{11}&=(C_{(11)2}^{mc})_{11}=
-e(\tfrac{1}{2}(\rho_3+c))(t_2-t_1)^{-\rho_3-c}
\frac{\Gamma(1+\rho_3-\alpha_1)}{\Gamma(1-\alpha_1-c)}(C_{12}^{(5)})_{11}
\frac{\Gamma(\beta_1-\rho_3-c+1)}{\Gamma(\beta_1+1)}\\
&=-e(\tfrac{1}{2}(\alpha_4-\rho_3))(t_2-t_1)^{\rho_3-\alpha_4}
\frac{\Gamma(1+\alpha_4-\alpha_1)}{\Gamma(1+\rho_3-\alpha_1)}(C_{12})_{11}
\frac{\Gamma(\beta_1^{(6)}+1)}{\Gamma(\beta_1^{(5)}+1)}\\
&=
e(\tfrac{1}{2}(\alpha_4-\rho_3))(t_2-t_1)^{\rho_3-\alpha_4}
\frac{\Gamma(1+\alpha_4-\alpha_1)}{\Gamma(1+\rho_3-\alpha_1)}
\frac{\Gamma(\beta_1^{(6)}+1)}{\Gamma(\beta_1^{(5)}+1)}
\frac{(t_2-t_1)^{\alpha_4-\alpha_i}}{(t_1-t_2)^{\rho_3-\beta_j}}
e(\tfrac{1}{2}(\rho_3-\alpha_1-\beta_1-\alpha_4))\\
&\frac{\Gamma(-\alpha_{1})\Gamma(\beta_1^{(5)}+1)}{\Gamma(1+\rho_1-\alpha_1)\Gamma(1+\rho_2-\alpha_1)}
\frac{\prod_{k\neq 1}^2\Gamma(\beta_1-\beta_{k})}{\prod_{k\neq 1}^{3}\Gamma(\beta_1+\alpha_{k}+\alpha_4-\rho_1-\rho_2-\rho_3)}
\frac{\prod_{k\neq 1}^{3}\Gamma(1+\alpha_{k}-\alpha_1)}{\prod_{k\neq 1}^2\Gamma(1+\rho_1+\rho_2+\rho_3-\beta_k-\alpha_1-\alpha_4)}\\
&=e(\tfrac{1}{2}(-\alpha_1-\beta_1))
\frac{(t_2-t_1)^{\rho_3-\alpha_1}}{(t_1-t_2)^{\rho_3-\beta_j}}
\frac{\Gamma(-\alpha_{1})\Gamma(\beta_1+1)}{\Gamma(1+\rho_1-\alpha_1)\Gamma(1+\rho_2-\alpha_1)\Gamma(1+\rho_3-\alpha_1)}
\\
&
\frac{\prod_{k\neq 1}^2\Gamma(\beta_1-\beta_{k})}{\prod_{k\neq 1}^{3}\Gamma(\beta_1+\alpha_{k}+\alpha_4-\rho_1-\rho_2-\rho_3)}
\frac{\prod_{k\neq 1}^{4}\Gamma(1+\alpha_{k}-\alpha_1)}{\prod_{k\neq 1}^2\Gamma(1+\rho_1+\rho_2+\rho_3-\beta_k-\alpha_1-\alpha_4)}.
\end{split}
\end{equation}
As in the cases of types ${\rm II}^*$ and ${\rm III}^*$, we determine the other connection coefficients  by Lemma \ref{lem:sym} and operations $\sigma_{ij}^{\alpha}$, $\sigma_{ij}^{\beta}$.
For $\sigma_{1i}^{\alpha}$, $\sigma_{12}^{\beta}$, the Okubo system \eqref{eq:IV} satisfies the relations
\begin{equation}\label{relationIV1}
A={\rm Ad}(D)\sigma_{1i}^{\alpha}\sigma_{12}^{\beta}(A)\quad D={\rm diag}(S_{1i},S_{12}),\quad (i=1,2,3).
\end{equation}
By taking account of Lemma \ref{lem:sym} and \eqref{relationIV1}, the connection coefficients $(C_{12})_{ij}$ $(i=1,2,3;j=1,2)$
 are given by 
\begin{equation}
(C_{12})_{ij}=\sigma_{1i}^{\alpha}\sigma_{1j}^{\beta}(C_{12})_{11}.
\end{equation}
Although the system \eqref{eq:IV} is {\em not} symmetric with respect to ($\alpha_1,\alpha_4$), for   the operation $\sigma_{14}^{\alpha}$
%the relation \eqref{relationIV1} isn't satisfied. In this case, 
it satisfies the relation
\begin{equation}
A={\rm Ad}(D_{14}^{\alpha})\sigma_{14}^{\alpha}(A)
\end{equation}
where $D_{14}^{\alpha}={\rm diag}(d_1,\ldots,d_6){\rm diag}(S_{14},\,I_2)$ is the matrix defined by
\begin{equation}
\begin{split}
&d_1=\prod_{k=1}^2
(\alpha_1+\alpha_2+\beta_k-\rho_1-\rho_2-\rho_3)(\alpha_1+\alpha_3+\beta_k-\rho_1-\rho_2-\rho_3),\\
&d_2=-\prod_{k=1}^2(\alpha_1+\alpha_3+\beta_k-\rho_1-\rho_2-\rho_3),
\quad 
d_3=-\prod_{k=1}^2(\alpha_1+\alpha_2+\beta_k-\rho_1-\rho_2-\rho_3),\quad
d_4=1,\\
&d_5=\prod_{k\neq 1}^3
(\alpha_1+\alpha_k+\beta_1-\rho_1-\rho_2-\rho_3),
\quad
d_6=\prod_{k\neq 1}^3
(\alpha_1+\alpha_k+\beta_2-\rho_1-\rho_2-\rho_3).
\end{split}
\end{equation}
Therefore the connection coefficient $(C_{12})_{41}$ is derived by
\begin{equation}
(C_{12})_{41}=d_{4}\sigma_{14}^{\alpha}(C_{12})_{11}d_6^{-1}.
\end{equation}
Since $C_{12}=\sigma_{12}^{\beta}(C_{12})S_{12}$, we obtain
\begin{equation}
(C_{12})_{4j}=\sigma_{12}^{\beta}(d_{4}\sigma_{14}^{\alpha}(C_{12})_{11}d_6^{-1}).
\end{equation} 
The other connection coefficients $(C_{21})_{ij}$ of type ${\rm IV}$ are computed similarly.

\par\medskip
\noindent
Case ${\rm IV}^*$: 
The canonical form of type ${\rm IV}^*$ is constructed by
\begin{equation}
({\rm IV}^*)_6={\rm add}_{(0,0,\rho_1)}\circ{\rm mc}_{-c-\rho_1}\circ{\rm add}_{(0,0,c)}({\rm III}^*)_5 .
\end{equation}
The connection coefficients $(C_{12})_{11},(C_{13})_{11},(C_{23})_{11}$  are directly computed
from the connection coefficients $(C_{ij})_{11}^{(5)}$ of type $({\rm III}^*)_5$ by Theorem \ref{conn:mcadd}.
The other connection coefficients can be determined by combining Lemma \ref{lem:sym} and 
 the operations $\sigma_{12}^{\alpha}$, $\sigma_{12}^{\beta}$, $\sigma_{12}^{\gamma}$.
For $\sigma_{12}^{\alpha}$, $\sigma_{12}^{\beta}$, the Okubo system \eqref{eq:IV*} satisfies the relation
\begin{equation}\label{symIV*}
\begin{split}
A={\rm Ad}(D)\sigma_{12}^{\alpha}\sigma_{12}^{\beta}(A),\quad D={\rm diag}(S_{12},S_{12},I_2).
\\
\end{split}
\end{equation}
For $\sigma_{12}^{\gamma}$, the Okubo system \eqref{eq:IV*} satisfies
\begin{equation}
A={\rm Ad}(D^{\gamma})\sigma_{12}^{\gamma}(A),
\end{equation}
where $D^{\gamma}$ is the matrix  given by
\begin{equation}
D^{\gamma}={\rm diag}(h_{111}h_{121},\, h_{211}h_{221},\, -h_{111}h_{211},\, -h_{121}h_{221},\, 
 h_{111}h_{121}h_{211}h_{221},\, 1){\rm diag}(I_2,\,I_2,\,S_{12}).
\end{equation}
With these relations, the other connection matrices are determined by Lemma \ref{lem:sym} 
as follows:
\begin{equation}
\begin{split}
C_{12}&=S_{1i}\sigma_{1i}^{\alpha}\sigma_{1j}^{\beta}(C_{12})S_{1j},\quad 
C_{13}=S_{1i}\sigma_{1i}^{\alpha}(C_{13}),\quad 
C_{23}=S_{1i}\sigma_{1i}^{\beta}(C_{23}),\\
C_{13}&=
\begin{pmatrix}
h_{111}h_{121}&0\\
0&h_{211}h_{221}
\end{pmatrix}
\sigma_{12}^{\gamma}(C_{13})
S_{12}
\begin{pmatrix}
h_{111}h_{121}h_{211}h_{221}&0\\
0&1
\end{pmatrix}^{-1},\\
C_{23}&=
\begin{pmatrix}
-h_{111}h_{211}&0\\
0&-h_{121}h_{221}
\end{pmatrix}
\sigma_{12}^{\gamma}(C_{23})
S_{12}
\begin{pmatrix}
h_{111}h_{121}h_{211}h_{221}&0\\
0&1
\end{pmatrix}^{-1}.
\end{split}
\end{equation}
This completes the proof of the main theorems.

%\section*{Acknowledgements}
%The author is grateful to Professors Toshio Oshima, Masatoshi Noumi and Yoshishige Haraoka for their valuable comments and suggestions. 

\end{document}